\numberwithin{equation}{section}
\newtheorem{theorem}{Theorem}[section]
\newtheorem{claim}[theorem]{Claim}
\newtheorem{lemma}[theorem]{Lemma}
\newtheorem{corollary}[theorem]{Corollary}
\newtheorem*{theorem*}{Theorem}
\newtheorem*{claim*}{Claim}
\newtheorem*{proposition*}{Proposition}
\newtheorem*{lemma*}{Lemma}
\newtheorem*{corollary*}{Corollary}
\theoremstyle{definition}
\newtheorem{remark}[theorem]{Remark}
\newtheorem{fact}[theorem]{Fact}
\newtheorem*{definition*}{Definition}
\newtheorem*{observation*}{Observation}
\newtheorem*{remark*}{Remark}
\newtheorem*{example*}{Example}
\newtheorem*{question*}{Question}
\newtheorem*{exercise*}{Exercise}
\newtheorem*{fact*}{Fact}
\newtheorem*{notation*}{Notation}
\newcommand{\bbH}{\mathbb{H}}
\newcommand{\bbN}{\mathbb{N}}
\newcommand{\bbR}{\mathbb{R}}
\newcommand{\bbZ}{\mathbb{Z}}
\newcommand{\calG}{\mathcal{G}}
\newcommand{\calH}{\mathcal{H}}
\newcommand{\calL}{\mathcal{L}}
\newcommand{\calM}{\mathcal{M}}
\newcommand{\immerse}{\looparrowright}
\newcommand{\ii}{^{-1}}
\newcommand{\gen}[1]{\left< #1 \right>}
\newcommand{\tild}[1]{\widetilde{#1}}
\DeclareMathOperator{\Fix}{Fix}
\DeclareMathOperator{\diam}{diam}
\DeclareMathOperator{\CC}{CC}
\DeclareMathOperator{\Area}{Area}
\DeclareMathOperator{\rk}{rk}
\DeclareMathOperator{\Isom}{Isom}
\DeclareMathOperator{\Hull}{Hull} 
\title[Ascending chains of free subgroups in certain 3-manifold groups] {Ascending chains of free subgroups in closed hyperbolic and graph 3-manifold groups}
\author{Edgar A. Bering IV}
\address{Department of Mathematics \& Statistics, San José State University, San Jose, California}
\email{edgar.bering@sjsu.edu}
\author{Nir Lazarovich}
\address{Department of Mathematics, Technion---Israel Institute of Technology, Haifa, Israel}
\email{lazarovich@technion.ac.il}
\thanks{EB was supported by the Azrieli foundation.}
\thanks{NL was supported by the Israel Science Foundation (grant no. 1562/19)}
\date{}
\subjclass[2020]{57M05, 20E07, 57K20, 57K32}
\begin{document}

\begin{abstract}
Takahasi and Higman independently proved that
any ascending chain of subgroups of constant rank in a free group must stabilize. Kapovich and Myasnikov gave a proof of this fact in the language of graphs and Stallings folds. Using profinite techniques, Shusterman extended this constant-rank ascending chain condition to limit groups, which include closed surface groups. Motivated by Kapovich and Myasnikov's proof we provide two new proofs of this ascending chain condition for closed surface groups, and establish the ascending chain condition for free subgroups of constant rank in fundamental groups of closed hyperbolic and graph 3-manifolds.

These results are now subsumed by the more general framework established in joint work with Heikamp, Kohav, and Munro which both corrected a mistake in a previous version of this paper and generalized it.
The proof for closed hyperbolic 3-manifolds and graph manifolds is preserved in this unpublished note for its direct, geometric approach, which remains valid and particularly transparent.
\end{abstract}

\maketitle

\section{Introduction}

When does a set of free subgroups of a group satisfy the ascending chain condition? If a group $G$ contains a a subgroup isomorphic to the free group $F(a, b)$ then the subgroups $\gen{a} < \gen{a,bab^{-1}} < \gen{a, bab^{-1}, b^2ab^{-2}} < \cdots$ are a proper ascending chain of unbounded rank. Imposing a bound on the rank of each subgroup under consideration removes this obvious chain and it is a classical result due independently to Takahasi~\cite{takahasi}*{Theorem 1} and Higman~\cite{higman}*{Lemma} that any ascending chain of subgroups of constant rank in a fixed free group must stabilize.

Kapovich and Myasnikov~\cite{kapovich-myasnikov}*{Theorem 14.1} give a proof of this ascending chain condition using finite graphs following Stallings. To sketch the argument, suppose $G$ is a fixed finite-rank free group and $H_1 \le H_2 \le H_3 \le \cdots \le G$ is an ascending chain of subgroups of constant rank $r$. Fix a finite graph $\Gamma$ such that $G = \pi_1(\Gamma)$. Its universal cover $\tilde{\Gamma}$ is a tree. Each $H_i$ acts on $\tilde{\Gamma}$ and has a minimal non-empty invariant subtree $\tilde{\Gamma}_i$ with quotient a finite graph $\Gamma_i$. The nesting of subgroups induces graph maps $\Gamma_i \to \Gamma_{i+1}$. One may assume without loss of generality (\cref{no-free-factors}) that for all $i$ the subgroup $H_i$ is not contained in a proper free factor of $H_{i+1}$. This implies that each map $\Gamma_i \to \Gamma_{i+1}$ is surjective. However, each $\Gamma_i$ must have at least $r$ edges. Therefore the inclusion $H_i \le H_{i+1}$ can only be proper a finite number of times and the chain stabilizes.

Inspired by this argument we establish an ascending chain condition for free groups of constant rank in closed hyperbolic 3-manifold groups.

\begin{theorem}\label{main result for hyperbolic}
Fundamental groups of closed hyperbolic 3-manifolds do not admit proper ascending chains of free subgroups of constant rank.
\end{theorem}

In the setting of surface groups, this constant-rank chain condition was proved by Shusterman using the theory of limit groups and profinite completions~\cite{shusterman}*{Theorem 1.3}. 
To motivate the proof of \cref{main result for hyperbolic} we give a new proof of this ascending chain condition in the closed surface setting. Our proof uses only tools from surface theory: Mumford's compactness criterion for the moduli space of hyperbolic metrics replaces edge-counting as the source of finiteness which forces any chain to stabilize, as detailed in \cref{no ascending in surfaces}.

Graph manifolds are 3-manifolds whose JSJ decomposition consists only of Seifert-fibered pieces. 

\begin{theorem}\label{main result for graph manifolds}
Fundamental groups of graph manifolds do not admit proper ascending chains of free subgroups of constant rank.
\end{theorem}

To prove \Cref{main result for graph manifolds}, we prove it first for Seifert-fibered pieces (\cref{no ascending in SFS}), and then use the following combination theorem. A graph of groups decomposition is \emph{$k$-acylindrical} if the stabilizer of any path of length $k$ is finite.

\begin{theorem}\label{main result for graph of groups}
If a group $G$ admits an $k$-acylindrical graph of groups decomposition over abelian edge groups such that no vertex groups admits a proper ascending chain of constant rank free groups, then $G$ does not admit a proper ascending chain of constant rank free subgroups.
\end{theorem}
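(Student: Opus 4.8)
The plan is to mimic the Kapovich--Myasnikov argument sketched above, replacing Stallings graphs by Bass--Serre quotient graphs of groups. Fix the action of $G$ on the Bass--Serre tree $T$ of the given splitting over abelian groups, and suppose toward a contradiction that $H_1 < H_2 < \cdots < G$ is a proper ascending chain of free subgroups, each of rank $r$. Each $H_i$ acts on $T$; let $T_i$ be its nonempty minimal invariant subtree and $\Gamma_i = H_i \backslash T_i$ the quotient graph of groups, which is finite since $H_i$ is finitely generated. Because $T_{i+1}$ is $H_i$-invariant it contains $T_i$, and since $H_{i+1}\cdot T_i$ is an $H_{i+1}$-invariant subtree of the minimal tree $T_{i+1}$ we get $H_{i+1}\cdot T_i = T_{i+1}$; hence the simplicial inclusion $T_i \hookrightarrow T_{i+1}$ descends to a surjective morphism $\phi_i \colon \Gamma_i \to \Gamma_{i+1}$ realizing the inclusion $H_i \le H_{i+1}$ on fundamental groups. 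The vertex groups of $\Gamma_i$ are the subgroups $H_i \cap gG_vg^{-1}$, which are free, being subgroups of $H_i$; the edge groups $H_i \cap gG_eg^{-1}$ are simultaneously free and abelian, hence trivial or infinite cyclic. After applying the reduction of \cref{no-free-factors} we may assume no $H_i$ lies in a proper free factor of $H_{i+1}$.

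The key bookkeeping device is the rank formula obtained from the Euler characteristic of a graph of spaces modeling $\Gamma_i$: writing $A_v$ for the vertex groups and $C_e$ for the cyclic edge groups, $r = b_1(\Gamma_i) + \sum_v \rk A_v - \sum_e \rk C_e$ with $\rk C_e \in \{0,1\}$. I would then run the following cascade of stabilizations. First, since $\phi_i$ is a surjective simplicial morphism it cannot increase the number of vertices or edges, so the sequences $V(\Gamma_i)$ and $E(\Gamma_i)$ are non-increasing sequences of non-negative integers and are therefore eventually constant; past that point every $\phi_i$ is a bijection on underlying graphs, i.e.\ an isomorphism of graphs. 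Second, along corresponding edges the inclusions give $C_e^{(i)} \le C_e^{(i+1)}$, so $\sum_e \rk C_e^{(i)}$ is a non-decreasing integer bounded by $E(\Gamma_i)$ and hence stabilizes; by the rank formula $\sum_v \rk A_v^{(i)}$ is then eventually constant. Third, the inclusions $A_v^{(i)} \le A_v^{(i+1)}$ make each $\rk A_v^{(i)}$ non-decreasing with constant total, forcing each individual vertex rank to stabilize.

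With vertex ranks stabilized, each vertex of $\Gamma_i$ carries an ascending chain $A_v^{(i)} \le A_v^{(i+1)} \le \cdots$ of free subgroups of constant rank inside a fixed conjugate $gG_vg^{-1}$ of a vertex group. By hypothesis $G_v$ admits no proper constant-rank ascending chain of free groups, so each such chain stabilizes as a chain of subgroups. Once the vertex groups are constant, each edge group $C_e^{(i)}$ is a cyclic subgroup of the now-fixed free vertex group into which it includes; an ascending chain of cyclic subgroups of a free group lies inside the unique maximal cyclic subgroup containing its first term, and so passes through only finitely many intermediate subgroups, hence stabilizes. At this stage the underlying graph, all vertex groups, and all edge groups of $\Gamma_i$ are constant and $\phi_i$ is an isomorphism of graphs of groups; therefore the map it induces on fundamental groups, which is the inclusion $H_i \le H_{i+1}$, is an isomorphism, i.e.\ $H_i = H_{i+1}$, contradicting properness of the chain.

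I expect the main obstacle to be making the first paragraph fully rigorous: verifying that the induced morphisms are genuinely surjective simplicial maps of finite graphs of groups realizing the subgroup inclusions, and pinning down the rank formula in the presence of loop edges (HNN pieces) and nontrivial cyclic edge groups. A secondary subtlety is the interaction between the no-free-factor reduction and the cyclic rather than trivial edge groups, since a proper sub-graph-of-groups need not correspond to a free factor; here the automatic surjectivity coming from minimality of $T_{i+1}$ is what saves the argument, and I would want to confirm that this minimality behaves compatibly along the whole chain.
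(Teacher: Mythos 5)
Your argument has a genuine gap at its foundation: the claim that minimality of $T_{i+1}$ forces the induced map $\phi_i\colon \Gamma_i \to \Gamma_{i+1}$ to be surjective is false. The set $H_{i+1}\cdot T_i$ is $H_{i+1}$-invariant, but it is in general a subforest, not a subtree: a union of translates of a subtree is almost never connected, and minimality of $T_{i+1}$ only says that $T_{i+1}$ lies in every invariant \emph{subtree}, which gives no information about invariant forests. Concretely, take $G = \langle x,y,t \mid txt^{-1}=y\rangle \cong F(x,t)$, viewed as an HNN extension of the vertex group $\langle x,y\rangle$ over infinite cyclic edge groups, and let $H_1 = \langle x, y\rangle < H_2 = G$. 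Both are free of rank $2$, the inclusion is proper, and $H_1$ lies in no proper free factor of $H_2$ (proper free factors of $F_2$ are cyclic), so this chain satisfies every reduction you imposed; yet $H_1$ is elliptic, its minimal subtree is a single vertex, $H_2\cdot T_1$ is a disconnected set of vertices, and the induced map of quotient graphs misses the loop of $\Gamma_2$ entirely. So surjectivity genuinely fails, and everything downstream of your first paragraph (the edge and vertex counting, the stabilization cascade) has no starting point. This failure is exactly what the paper's \cref{surjective graph of groups} is designed to repair, and the repair is not a formality: one must \emph{replace} the chain by a new chain of subgroups $K_i$ with $H_i \le K_i \le H_{i+1}$ and run an induction on the complexity $c(H) = \beta_1(\Gamma^H) + \rk(H)$, using Shenitzer's theorem (separating cyclic edges) and Swarup's theorem (non-separating cyclic edges) to show that passing to the image sub-graph-of-groups strictly decreases $c$. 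Note that rank alone does \emph{not} decrease --- by Swarup's theorem, deleting a non-separating cyclic edge preserves rank, as the example above illustrates --- which is precisely why neither the no-free-factor reduction of \cref{no-free-factors} nor any purely rank-based bookkeeping can substitute for that lemma.

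There is also a secondary error: rank of free groups is not monotone under inclusion ($F_2$ contains free subgroups of every finite rank), so the assertion that the inclusions $A_v^{(i)} \le A_v^{(i+1)}$ make each $\rk A_v^{(i)}$ non-decreasing is wrong, and you cannot deduce that individual vertex ranks stabilize from constancy of their sum. This one is repairable: your rank formula bounds $\sum_v \rk A_v^{(i)}$, so the vertex ranks are bounded and by the pigeonhole principle you may pass to a subsequence on which each is constant --- which is also what you need in order to invoke the hypothesis, since it concerns constant-rank chains. With surjectivity supplied by the paper's \cref{surjective graph of groups} and this repair, your endgame (stabilize the graph, then the vertex groups using the hypothesis, then the edge groups, then conclude $H_i = H_{i+1}$) is essentially the paper's.
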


We remark that surface groups admit a 2-acylindrical graph of groups decomposition over cyclic groups with free vertex groups. Therefore, \cref{main result for graph of groups} gives a second new proof for surface groups.
The proof of our combination result should be compared to Shusterman~\cite{shusterman}*{Proposition 3.2}.

We remark that \Cref{main result for graph manifolds,main result for hyperbolic} are now subsumed by the results of joint work with Heikamp, Kohav, and Munro \cite{bering2026ascending}. In that work a bounded rank ascending chain condition is proved for (not necessarily free) subgroups of compact 3-manifolds groups and locally quasiconvex subgroups of (relatively) hyperbolic groups.

\bigskip 

In general, subgroups of $\Isom^+(\bbH^3)$ do not satisfy the ascending chain condition on bounded rank free groups: Calegari and Dunfield produced a non-discrete free group of rank 6 in $\Isom^+(\bbH^3)$ that is conjugate to a proper subgroup of itself~\cite{calegari-dunfield}. The union of these nested conjugates has a proper ascending chain of free subgroups of rank 6. 

There is an alternative definition that excludes the ``obvious'' ascending chains in a free group: Consider only ascending chains of finitely generated free groups in which each inclusion is not into a proper free factor. Higman proved a locally free group (a group where every finitely generated subgroup is free) is countably free if and only if every ascending chain of finitely generated free groups with no proper free factor inclusions stabilizes~\cite{higman-almost-free}*{Theorem 1}.\footnote{Higman uses the shorthand $n$-subgroup for finitely generated subgroup in this paper~\cite{higman-almost-free}*{p. 285}.} Countable free groups are the only countable, countably free groups. Thus, Higman's criterion provides a very strong constraint on groups where every factor-avoiding chain of finitely generated free subgroups stabilizes: every locally free subgroup must be free. This is true for surface groups, and one can verify the factor-avoiding ascending chain condition for surface groups using an argument similar to Higman's. 

However, 3-manifold groups can contain non-free locally free subgroups, even in geometric settings. Kurosh~\cite{kurosh} proved that the group $K = \langle a, b, t | tat^{-1} = [a, b] \rangle$ has a locally free subgroup which is not free. Thus, by Higman's characterization, $K$ must contain a proper ascending chain of finitely generated free groups where no inclusion is into a proper free factor. One can construct a space $X$ with $\pi_1(X) = K$ by gluing together an infinite string of one-holed torii according to the presentation relation, by thickening $X$ one obtains a 3-manifold. Hyperbolic geometry cannot rule out such phenomena: Maskit~\cite{maskit}*{Chapter VIII.E.9} realized $K$ as a Kleinian group. Building on Maskit, Anderson~\cite{anderson} used $K$ to construct infinitely many non-commensurable finite-volume hyperbolic 3-manifolds with $K$ as a subgroup of the fundamental group.

\subsection*{Acknowledgements}

We are grateful to Michah Sageev, particularly in the early stages of this work, Ian Biringer, David Futer, Chris Hrushka, and Henry Wilton, for helpful conversations. We are especially grateful to an anonymous referee who found the mistake in the previous version of this paper, and to Jakub Heikamp, Jack Kohav and Zachary Munro for our joint work fixing the mistake and and generalizing the techniques in the earlier version~\cite{bering2026ascending}.

\section{Avoiding splitting factors}

In each of our proofs the first essential reduction is from general chains to chains where $H_i$ is not contained in a proper free factor of $H_{i+1}$. This condition is used to ensure that a corresponding map of topological objects is essential. A version of lemma exists, in different language, as the first part of Kapovich and Myasnikov's proof, but it is not stated explicitly~\cite{kapovich-myasnikov}*{Proof of Theorem 14.1}.

\begin{lemma}\label{no-free-factors}
Let $G$ be a group. Suppose $H_1 \le H_2 \le \ldots \le G$ is an ascending chain of free subgroups of constant rank. Then, there exists $K_1\le K_2 \le \ldots \le G$ an ascending chain of free subgroups of constant rank such that for all $i$ the group $K_i$ is not contained in a proper subgroup $L \le K_{i+1}$ where $\rk(L) < \rk(K_i)$. In particular, $K_i$ is not contained in a proper free factor of $K_{i+1}$ for all $i$.
Moreover, if the chain $H_i$ does not stabilize, then the chain $K_i$ does not stabilize.
\end{lemma}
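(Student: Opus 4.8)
The plan is to work entirely inside the free groups $H_j$, using the Nielsen--Schreier theorem that every subgroup of $H_j$ is free, and to produce the new chain by \emph{simultaneously} passing to a subsequence of the $H_i$ and replacing each term by a minimal-rank intermediate subgroup. Write $r$ for the common rank; I may assume $r \ge 1$, as the rank-$0$ case is trivial. For free groups $A \le B$ set
\[
\mu(A,B) = \min\set{\rk(L) \cond A \le L \le B},
\]
a well-defined integer in $\set{1,\dots,\rk(B)}$, attained since the relevant ranks are positive integers bounded above by $\rk(B)$ (take $L = B$). The condition demanded of the chain $K_i$ is exactly $\mu(K_i,K_{i+1}) = \rk(K_i)$: an intermediate subgroup $K_i \le L \le K_{i+1}$ with $\rk(L) < \rk(K_i) = \rk(K_{i+1})$ is automatically proper, and any proper free factor of $K_{i+1}$ containing $K_i$ is such an $L$ (its rank is strictly below $\rk(K_{i+1})$), which is what yields the ``in particular'' clause.

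The key structural input is two monotonicity properties of $\mu$ along the chain. First, for fixed $i$ the quantity $\mu(H_i,H_j)$ is non-increasing in $j$, because a minimal intermediate subgroup for $(H_i,H_j)$ still lies between $H_i$ and $H_{j'}$ for every $j' \ge j$. Second, for fixed $j$ it is non-decreasing in $i$ along the chain, since enlarging the bottom group only shrinks the family of intermediate subgroups. Consequently $d_i := \min_{j \ge i}\mu(H_i,H_j)$ is attained, and combining the two monotonicities gives that $i \mapsto d_i$ is non-decreasing; being integer-valued and bounded above by $r$, it is eventually equal to some constant $d$, say for all $i \ge I$.

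With $d$ and $I$ fixed I build the new chain greedily. Put $i_1 = I$; since $\mu(H_{i_1},H_j) = d$ for all sufficiently large $j$, choose such a $j =: i_2 > i_1$ together with an intermediate subgroup $H_{i_1} \le K_1 \le H_{i_2}$ of rank exactly $d$. Repeating with $i_2$ in place of $i_1$ produces $i_3 > i_2$ and $H_{i_2} \le K_2 \le H_{i_3}$ of rank $d$, and so on. Because $K_n \le H_{i_{n+1}} \le K_{n+1}$, the groups $K_1 \le K_2 \le \cdots$ form an ascending chain of free subgroups of constant rank $d$. To verify the defining property, suppose $K_n \le L \le K_{n+1}$; then $H_{i_n} \le L \le H_{i_{n+2}}$, so $\rk(L) \ge \mu(H_{i_n},H_{i_{n+2}}) \ge d_{i_n} = d = \rk(K_n)$. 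Hence no intermediate subgroup of smaller rank exists, as required.

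For the non-stabilization clause, suppose the $K_n$ were eventually constant equal to $K$, for $n \ge N$. Then $H_{i_n} \le K$ for all $n \ge N$, so $\bigcup_m H_m \le K \le H_{i_{N+1}}$, forcing $\bigcup_m H_m = H_{i_{N+1}}$; thus the original chain stabilizes, contrary to hypothesis. The only delicate points are bookkeeping: checking that $\mu$ is attained and monotone (so that $d_i$ stabilizes) and that the subsequence can always be advanced strictly (using that each value $d_i = d$ is realized for \emph{all} large $j$, not merely one). I expect the crux to be establishing the clean monotonicity of $d_i$ --- the double limit that upgrades ``the rank can drop only finitely often'' into a genuine stabilizing invariant; everything else is a routine recursion.
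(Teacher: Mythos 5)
Your proof is correct, but it takes a genuinely different route from the paper's. The paper argues by induction on the common rank $r$: it considers the set $I$ of indices $i$ for which $H_i$ lies in a subgroup of $H_{i+1}$ of rank less than $r$; if $I$ is finite a tail of the chain already works, and if $I$ is infinite the witnessing intermediate subgroups themselves form a properly ascending chain of strictly smaller rank, to which (after a pigeonhole pass to a constant-rank subsequence) the induction hypothesis applies. You avoid induction entirely: you introduce the minimal intermediate rank $\mu(A,B)$, prove its two monotonicity properties along the chain, deduce that the minimax invariant $d_i = \min_{j\ge i}\mu(H_i,H_j)$ is non-decreasing and hence eventually constant equal to some $d$, and then build the new chain in a single greedy pass, interleaving $H_{i_n} \le K_n \le H_{i_{n+1}}$ with each $K_n$ of rank exactly $d$. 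The interleaving makes both conclusions nearly automatic: the defining property follows from $\rk(L)\ge\mu(H_{i_n},H_{i_{n+2}})\ge d_{i_n}=d$, and stabilization of the chain $K_n$ immediately forces stabilization of the chain $H_m$, whereas the paper must track properness of inclusions through both the induction and the pigeonhole extraction to secure the ``moreover'' clause. The trade-off is that your argument needs the $\mu$ and $d_i$ bookkeeping up front, while the paper's recursion is shorter to state. One small inaccuracy, harmless to the argument: ranks of intermediate subgroups $A\le L\le B$ are \emph{not} bounded above by $\rk(B)$ (a subgroup of a free group can have arbitrarily large rank); what you actually need, and what is true, is that a nonempty set of positive integers has a minimum, and that taking $L=B$ shows this minimum is at most $\rk(B)$.
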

\begin{proof}
We prove the lemma by induction on the constant rank $r$ of $H_i$. If $r=1$, there is nothing to prove. 
Let $I\subset \bbN$ be the subset of indices such that $H_i$ is contained in a subgroup of $H_{i+1}$ with rank less than $r$. If $I$ is finite, then the tail $K_i = H_{N+i}$ where $N=\max I$ has
the required property. If $I$ is infinite, then for each $i\in I$ we get $H_i \le  K_i \le H_{i+1}$ where $K_i$ is a proper subgroup of $H_{i+1}$ and $\rk(K_i) < \rk(H_{i+1}) = r$. Thus, we get an ascending chain $\{K_i\}_{i\in I}$ of strictly smaller rank where each inclusion is proper. By the pigeonhole principle there is a subsequence $\{K_j\}_J \subseteq\{K_i\}_I$ with constant rank strictly smaller than $r$. Applying the induction hypothesis to $\{K_j\}_J$ completes the proof.
\end{proof}

\begin{lemma}\label{base not in free factor}
Let $G$ be a group. Suppose $H_1 \le H_2\le \ldots \le G$ is an ascending chain of free subgroups
and for all $i$ the group $H_i$ is not contained in a proper free factor of $H_{i+1}$. Then for all $i$ the group $H_1$ is not contained in a proper free factor of $H_i$.
\end{lemma}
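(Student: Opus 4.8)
The plan is to induct on $i$. The statement is trivial for the base case $i=1$, since the only free factor of $H_1$ that contains $H_1$ is $H_1$ itself (if $H_1 \le A \le H_1$ then $A = H_1$), so no proper free factor of $H_1$ contains $H_1$. For the inductive step I assume that $H_1$ is not contained in any proper free factor of $H_i$ and aim to deduce the same for $H_{i+1}$.

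For the inductive step I would argue by contradiction. Suppose $H_{i+1} = A * B$ is a free product decomposition with $A$ a proper free factor (so $B \ne 1$) and $H_1 \le A$. The key observation is that the intersection $H_i \cap A$ is a free factor of $H_i$. This follows from the Kurosh subgroup theorem applied to $H_i \le A * B$: taking the identity element as a double-coset representative for $H_i \backslash H_{i+1} / A$, the subgroup $H_i \cap A = H_i \cap 1 \cdot A \cdot 1^{-1}$ appears as one of the conjugate-intersection free factors in the Kurosh decomposition of $H_i$, and hence is itself a free factor of $H_i$.

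Now, since $H_1 \le H_i$ and $H_1 \le A$, we have $H_1 \le H_i \cap A$. By the inductive hypothesis, $H_1$ is not contained in any proper free factor of $H_i$, so the free factor $H_i \cap A$ cannot be proper; therefore $H_i \cap A = H_i$, i.e.\ $H_i \le A$. But then $H_i$ is contained in the proper free factor $A$ of $H_{i+1}$, contradicting the standing hypothesis of the lemma that $H_i$ is not contained in a proper free factor of $H_{i+1}$. This contradiction completes the inductive step and hence the proof.

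The main (and essentially only) nontrivial ingredient is the fact that a free factor of an ambient free group meets a subgroup in a free factor of that subgroup; I would justify this via the Kurosh subgroup theorem, taking care to note that the identity is a legitimate double-coset representative so that $H_i \cap A$ genuinely occurs as a Kurosh factor rather than being absorbed into the free part. Everything else is bookkeeping, and notably no rank comparison is required, since all the $H_i$ share the same constant rank $\rk(H_i) = r$.
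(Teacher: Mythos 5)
Your proof is correct and takes essentially the same route as the paper's: induction on $i$ with the Kurosh subgroup theorem applied to the inclusion of one chain element into a nontrivial free splitting of the next, then invoking the inductive hypothesis together with the chain hypothesis. If anything, your version is slightly cleaner, since you isolate the single Kurosh factor $H_i \cap A$ containing $H_1$ rather than reasoning about the full decomposition $F \ast K_{i-1} \ast L_{i-1}$ as the paper does.
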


\begin{proof}
We proceed by induction on $i$. When $i=1, 2$ the claim is true by hypothesis. So suppose $H_1$ is not contained in a proper free factor of $H_{i-1}$. Suppose $H_i = K\ast L$ is a non-trivial free splitting of $H_i$. By hypothesis $H_{i-1}$ is not contained in $K$ or $L$, so  every factor in the Kurosh decomposition of $H_{i-1} = F \ast K_{i-1} \ast L_{i-1}$ is nontrivial; here where $F$ is free, and $K_{i-1}, L_{i-1}$ are free products of cojugates of subgroups of $K$ and $L$ respectively. By the induction hypothesis $H_1$ is not contained in $F$, $K_{i-1}$, or $L_{i-1}$, and therefore $H_1$ is not contained in any conjugate of $K$ or $L$.
\end{proof}

We assume that the reader is familiar with Bass-Serre Theory, and use the definitions and notation of  Serre~\cite{serre-trees}.
Given a graph of groups $\calG = (\Gamma, \{G_v\}_{v\in V\Gamma}, \{G_e\}_{e\in E\Gamma})$ with Bass-Serre tree $T$, any finitely generated subgroup $H \le \pi_1(\calG) = G$ has a minimal invariant subtree in $T$. The induced action of a subgroup $H$ is \emph{elliptic} if the action has a global fixed point. If $H$ does not stabilize an edge then the minimal subtree is unique and denoted $T^H$. Let $\calG^H = (\Gamma^H, \{H_v\}_{v\in V\Gamma^H}, \{H_e\}_{e\in E\Gamma^H})$ denote the graph of groups corresponding to $T^H$ and denote its underlying graph $\Gamma^H$. 

When taking $H=G$, the underlying graph of $\Gamma^G$ is a subgraph of $\Gamma$ and the vertex and edge groups in $\calG^G$ are the same as in $\calG$. 
A graph of groups is \emph{minimal} if its associated tree is minimal, or equivalently if $\calG^G = \calG$. For any two nested subgroups with unique minimal subtrees, $K\le H \le \pi_1(\calG)$ the inclusion of minimal subtrees induces a map of underlying graphs $\Gamma^K \to \Gamma^H$. 

An ascending chain of subgroups of a fixed free group can be represented as a chain of graph maps. In this setting, if the chain of subgroups satisfies the conclusion of \cref{no-free-factors} then each representative graph map is surjective. When working with a chain of free subgroups in a group with an abelian splitting an analogous surjectivity can be imposed on the induced maps of graphs underlying the induced splitting of the chain.

\begin{lemma}\label{surjective graph of groups}
Suppose $\calG = (\Gamma, \{G_v\}_{v\in V\Gamma}, \{G_e\}_{e\in E\Gamma})$ is a $k$-acylindrical finite graph of groups with abelian edge groups with fundamental group $G = \pi_1(\calG)$. Suppose $H_1 \le H_2 \le \ldots \le G$ is an ascending chain of free subgroups of constant rank. Then,  there exists $K_1\le K_2 \le \ldots \le G$ an ascending chain of free subgroups of constant rank such that either each $K_i$ is elliptic or each $K_i$ is non-elliptic and the induced map $\Gamma^{K_i} \to \Gamma^{K_{i+1}}$ is surjective for all $i$.
Moreover, if the chain $H_i$ does not stabilize, then the chain $K_i$ does not stabilize.
\end{lemma}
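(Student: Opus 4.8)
The plan is to reduce, via \cref{no-free-factors}, to a chain with strong factor‑avoidance, to reinterpret non‑surjectivity of the induced maps through minimal subtrees in the Bass–Serre tree of $\calG$, and then to run a descent on the number of edges of the quotient graphs. Throughout, $T$ denotes the Bass–Serre tree of $\calG$ and $T_{M} \subseteq T$ the minimal subtree for a subgroup $M$, so that $\Gamma^{M} = T_M/M$.

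First I would apply \cref{no-free-factors} to replace $\{H_i\}$ by a constant‑rank chain (still called $\{H_i\}$, of rank $r$) in which no $H_i$ lies in a proper subgroup of $H_{i+1}$ of rank $< r$; its ``moreover'' clause preserves non‑stabilization. Next I would observe that ellipticity is upward closed along the chain: if $H_i$ contains a hyperbolic isometry of $T$ then so does every $H_j$ with $j \ge i$. Hence either every $H_i$ is elliptic --- in which case each $\Gamma^{H_i}$ is a single vertex and every induced map is trivially surjective --- or, after discarding an initial segment, every $H_i$ acts hyperbolically with $T_{H_i}$ containing an edge. So I may assume all chain elements are hyperbolic.

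The local step is the heart of the argument. Fix consecutive $K \le K'$ in the chain, with $T_K \subseteq T_{K'}$. Since $K'$ acts minimally on $T_{K'}$, the invariant set $K' \cdot T_K$ is either all of $T_{K'}$, whence $\Gamma^{K} \to \Gamma^{K'}$ is surjective, or a \emph{disconnected} $K'$‑invariant subforest (a connected proper invariant subtree would contradict minimality). In the latter case let $Y_0$ be the component containing $T_K$ and put $L = \Stab_{K'}(Y_0)$; then $K \le L \lneq K'$ and $Y_0/L$ is exactly the image $\Delta \subsetneq \Gamma^{K'}$ of the map. I would then bound $\rk L$ by an Euler‑characteristic computation on the graph of groups obtained from $\Gamma^{K'}$ by collapsing $\Delta$. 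The decisive input is that edge groups are cyclic and inject into their vertex groups: a $\bbZ$‑edge can only meet vertices with nontrivial (hence $\chi \le 0$) group, while a trivial‑group vertex is incident only to trivial edges and, by minimality, has valence at least two. A short double‑count of the endpoints of the trivial edges then gives $\rk L \le r$. Combined with factor‑avoidance, which forbids $\rk L < r$, this forces $\rk L = r$, while the image $\Delta$ (which contains $\Gamma^{L}$) has strictly fewer edges than $\Gamma^{K'}$.

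Finally I would globalize by descent on edge complexity. If only finitely many induced maps are non‑surjective, a tail of the chain works. Otherwise, passing to the non‑surjective indices and replacing each $H_{i+1}$ by the corresponding $L_i$ produces a chain $\{L_i\}$ which is ascending, since $L_i \le H_{i+1} \le L_{i+1}$, is proper, inherits non‑stabilization, has $\rk L_i = r$ by the local step, and, writing $\Delta_i$ for the image of $\Gamma^{H_i}\to\Gamma^{H_{i+1}}$, satisfies $|E(\Gamma^{L_i})| \le |E(\Delta_i)| \le |E(\Gamma^{H_i})|$ with strict drop at the replaced steps (the image of a graph morphism has no more edges than its source). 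Re‑applying \cref{no-free-factors} either preserves the rank, leaving an honest rank‑$r$ chain of strictly smaller edge complexity, or lowers it, feeding an outer induction on $r$; iterating should terminate in a chain all of whose induced maps are surjective. The main obstacle, and the step I expect to require the most care, is the well‑foundedness of this descent: one must prevent the edge counts from marching off to infinity. The bound $|E(\Gamma^{L_i})| \le |E(\Gamma^{H_i})|$ is what makes a minimal‑counterexample argument on $\liminf_i |E(\Gamma^{H_i})|$ viable, and the remaining bookkeeping is to interleave the rank induction of \cref{no-free-factors} with the edge descent and to check that non‑stabilization survives each of the reductions (tails, subsequences, and passage to the proper intermediate subgroups $L_i$).
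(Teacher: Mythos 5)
Your local step is correct, and it constructs exactly the same intermediate subgroups as the paper's proof: at a non-surjective step the image subgraph $\Delta_i \subsetneq \Gamma^{H_{i+1}}$ and $L_i = \pi_1(\calG^{H_{i+1}}|_{\Delta_i})$ are precisely the $\Lambda_i$ and $K_i$ of the paper. Your rank bound $\rk L_i \le r$ via additivity of Euler characteristic plus the double count of trivial edges against trivial (valence $\ge 2$) vertices is valid, and is a genuinely different justification from the paper's, which instead invokes Shenitzer's and Swarup's theorems; pinning $\rk L_i = r$ by \cref{no-free-factors} is also fine. The gap is in the globalization, exactly at the point you flag: the descent is not well-founded, and the liminf patch does not repair it. First, $|E(\Gamma^{H})|$ is not bounded by any function of $\rk(H)$ (minimality only forbids valence-one vertices whose group equals the incident edge group, so e.g.\ a rank-one subgroup can have $\Gamma^H$ a cycle with arbitrarily many trivial vertices and trivial edges), so there is no a priori ceiling to induct against. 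Second, the minimal-counterexample argument on $m = \liminf_i |E(\Gamma^{H_i})|$ stalls: your strict inequality $|E(\Gamma^{L_i})| < |E(\Gamma^{H_{i+1}})|$ is against the \emph{shifted} subsequence of targets, while the comparison with $m$ uses only the non-strict bound $|E(\Gamma^{L_i})| \le |E(\Gamma^{H_i})|$. Since surjective steps never increase edge counts (an edge-surjective graph morphism has at least as many edges in its source as its target), one checks that $\liminf_{i \in I}|E(\Gamma^{H_i})| = m$, so the derived chain has rank $r$ and liminf \emph{exactly} $m$ again: nothing decreases, there is no contradiction, and the iteration may a priori run forever.

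The paper closes this loop by descending on a quantity that is uniformly bounded: $c(H) = \beta_1(\Gamma^H) + \rk(H) \le 2r$, where the strict drop at each deleted edge comes from Shenitzer (a separating cyclic edge forces the rank to drop) and Swarup (a non-separating cyclic edge keeps the rank but kills a loop of the graph, so $\beta_1$ drops); induction on $\max_i c(H_i)$ together with the pigeonhole principle then finishes. Your argument can be repaired in the same spirit by trading edge count for $\beta_1(\Gamma^{H})$, which satisfies $\beta_1(\Gamma^H) \le \rk(H)$ because $H$ surjects onto $\pi_1(\Gamma^H)$: since factor-avoidance forces $\rk L_i = r$, a missing \emph{separating} edge is impossible (by Shenitzer, or by the equality analysis in your own Euler-characteristic computation, it would trap $H_i$ in a proper intermediate subgroup of rank $< r$), so every edge missed by $\Delta_i$ is non-separating, giving $\beta_1(\Gamma^{L_i}) \le \beta_1(\Delta_i) < \beta_1(\Gamma^{H_{i+1}})$. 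Descent on $\max_i \beta_1(\Gamma^{H_i}) \le r$, interleaved with the rank induction you already set up, then terminates. Without some such bounded complexity, the edge-count bookkeeping alone cannot conclude.
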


\begin{proof}
First observe that for every non-elliptic free subgroup $H \le G$ all edge groups of $\calG^H$ are abelian and hence trivial or infinite cyclic.
For a non-elliptic finitely generated free subgroup $H\le G$ define its complexity to be 
\[c(H) = \beta_1(\Gamma^{H}) + \rk(H) + E'(H)\in \bbN,\]
where $\beta_1$ is the first betti number and $E'(H)$ is the number of edges of $\calG^H$ with non-trivial stabilizer. For an elliptic finitely generated free group $H\le G$ define $c(H) = \rk(H)$. 
Let $\Lambda$ be a connected component of $\Gamma^H \setminus e$ for some edge $e$.
Let $\calL$ be the restriction of $\calG^H$ to $\Lambda$, i.e. $\calL$ is the graph of groups whose underlying graph is $\Lambda$, and whose vertex and edge groups are the same as that of $\calG$ (on $\Lambda$). Finally, let $L = \pi_1(\calL)$ and note that $L\le G$, and that the associated tree of $\calL$ is a subtree of the one associated to $\calG$.

\begin{claim}\label{cyclic complexity claim}
$c(L) < c(H)$.
\end{claim}

\let\oldqed\qedsymbol
\renewcommand{\qedsymbol}{$\lozenge$}
\begin{proof}[Proof of claim.]
Note that $\calL$ may not be minimal, but a minimal graph of groups $\calG^L$ can be obtained by restricting to a (necessarily finite) subgraph $\Gamma^L$ of $\Lambda$.
It is immediate that $\beta_1(\Gamma^L) \le \beta_1(\Lambda) \le \beta_1(\Gamma^H)$ and $E'(\Gamma^L)\le E'(\Lambda)\le E'(\Gamma^H)$. 
By Grushko's theorem~\cite{grushko} if $H_e=1$ then $\rk(L)<\rk(H)$, and so $c(L)<c(H)$.
Otherwise, $E'(L) < E'(H)$ and it follows from either Shenitzer's theorem on cyclic splittings~\cite{shenitzer} of free groups or Swarup's theorem on cyclic HNN extensions of free groups~\cite{swarup} that $\rk(L)\le \rk(H)$. So again $c(L) < c(H)$.\footnote{Stallings gives a unified account of the two splitting theorems used here~\cite{stallings-folding-trees}.}
\end{proof}

\begin{claim}\label{claim: bound on complexity}
    There exists $C_k$ so that if $\rk(H)=k$ then $c(H)\le C_k$.
\end{claim}

\begin{proof}[Proof of claim.]
First observe that 
$\beta_1(\Gamma^H)\le \rk(H)$. It remains to bound $E'(H)$. This is a form of acylindrical accessibility (cf. Delzant \cite{delzant} or Sela \cite{sela}). 
Bestvina and Feighn establish a bound $\gamma(H)$ on the number of edges in a reduced graph of groups with small edge groups for $H$~\cite{BF}. 
Note that the edge groups of $\calG^H$ are trivial or infinite cyclic, and so they are \emph{small}---that is, every action of an edge group of $\calG^H$ on a tree stabilizes a point, an end, or a pair of ends.
The induced splitting $\calG^H$ of $H$ might not be \emph{reduced}---that is, it may include a degree two vertex incident to two distinct edges and whose vertex group is equal to one of the adjacent edge groups; this edge is \emph{redundant}. 
In this case, one can contract the redundant edge and obtain a smaller graph of groups with the same fundamental group.
A path of redundant edges with non-trivial stabilizer can be of length at most the acylindricity constant $k$. 
Thus $E'(H)\le k \cdot \gamma(H)$.
\end{proof}
\renewcommand{\qedsymbol}{\oldqed}

We now prove the lemma by induction on the maximum complexity $c = \max c(H_i)$ of the ascending chain $H_1\le H_2 \le\dots $ (note that this maximum is attained by \cref{claim: bound on complexity}).
 If $c=1$ then each $H_i$ is an elliptic cyclic subgroup, so take $K_i = H_i$.

Let $I$ be the subset of indices such that the induced map $f_i : \Gamma^{H_i} \to \Gamma^{H_{i+1}}$ is not surjective. If $I$ is finite, then the tail $K_i = H_{N+i}$ where $N = \max I$ has the required property. If $I$ is infinite then for each $i\in I$ we define a subgroup $H_i < K_i < H_{i+1}$ with $c(K_i) < c(H_i)$ as follows. Let $\Lambda_i = f_i(\Gamma^{H_i})$, $\calL_i = \calG^{H_{i+1}}|_{\Lambda_i}$ and set $K_i = \pi_1(\calL_i)$. The graph $\Lambda_i$ can be obtained from $\Gamma^{H_{i+1}}$ by deleting a finite list of edges $e_1,\ldots, e_k$ in sequence. Let $\Lambda_i^j$
be the connected component of the intermediate graph where $e_1,\ldots, e_j$ have been deleted that contains $\Lambda_i$, and set $L_j = \pi_1(\Lambda_i^j)$. Either $L_j = L_{j+1}$, or $\Lambda_i^j$ is minimal for $L_j$ and by \cref{cyclic complexity claim} $c(L_{j+1}) < c(L_j)$. Moreover by \cref{cyclic complexity claim}, since $\Gamma^{H_{i+1}}$ is minimal $c(L_1) < c(H_{i+1})$.  We conclude $c(K_i) < c(H_{i+1}) \le c$. By the pigeonhole principle there is a subsequence $\{K_j\}_J \subseteq \{K_i\}_I$ of constant rank, and either every $K_j$ is elliptic or non-elliptic. In the first case we are done, in the second case applying the induction hypothesis to $\{ K_j\}$ completes the proof.
\end{proof}

\begin{proof}[Proof of \cref{main result for graph of groups}]
Let $\calG = (\Gamma, \{G_v\}_{v\in V\Gamma}, \{G_e\}_{e\in E\Gamma})$ be a finite graph of groups with abelian edge groups. Let $G=\pi_1(\calG)$. Let $H_1\le H_2 \le \ldots$ be an  ascending chain of free subgroups of constant rank.

By \cref{surjective graph of groups}, we may assume without loss of generality that either all $H_i$ are elliptic, or, all $H_i$ are non-elliptic and the induced graph maps $\Gamma^{H_i}\to \Gamma^{H_{i+1}}$ are surjective. We now analyze each of the cases.

If $H_i$ are elliptic. Consider the fixed point set $\Fix(H_i)$ of $H_i$. By acylindricity, $\Fix(H_i)$ has finite diameter. Since $H_i \le H_{i+1}$, $\Fix(H_i) \supseteq \Fix(H_{i+1})$ and so $\bigcap_i \Fix(H_i) \ne \emptyset$. Therefore, $H_i$ stabilize the same vertex and $H_i$ is an ascending chain in a conjugate of some $G_v$; by hypothesis the chain stabilizes.

In the second case, since each graph map $\Gamma^{H_i}\to \Gamma^{H_{i+1}}$ is surjective, eventually the sequence of graphs stabilizes to a graph $\Lambda$. Without loss of generality assume $\Gamma^{H_i} = \Lambda$ for all $i$ and that the morphism $\calH_i \to \calG$ induces a fixed map $f:\Lambda\to \Gamma$ on the common underlying graph. Let $T$ be the Bass-Serre tree for $\Gamma$ and $T^{H_1}$ the minimal subtree for $H_1$. Since $T^{H_1}\subseteq T^{H_i}$ for all $i$, for each vertex $v\in T^{H_1}$, there is a chain $(H_i)_v$ of stabilizers. By assumption each of the chains $(H_i)_v$ eventually stabilizes. Since $T^{H_1}\subseteq T^{H_i}$, up to conjugacy the vertex stabilizers in each $T^{H_i}$ are determined by the finitely many $H_1$-orbits of vertices in $T^{H_1}$, since each tree has the same finite graph quotient $\Lambda$. Thus, at some finite step $n$, the graphs of groups $\Gamma^{H_i}$ stabilize, which implies $H_i$ stabilizes, as required.
\end{proof}

\section{Hyperbolic isometries and total displacement}

In this section we record some notions from hyperbolic geometry, which are common to our surface and hyperbolic 3-manifold arguments.

Given a subset $X \subseteq \bbH^n \cup S_\infty^{n-1}$ its convex hull is denoted $\Hull(X)$. For $g\in \Isom(\bbH^n)$ we let $\tau(g)$ denote the minimum translation length; recall $\tau(g)$ is a conjugacy invariant. An isometry $g\in\Isom(\bbH^n)$ is \emph{parabolic} if $\tau(g) = 0$ but $g$ fixes no point of $\bbH^n$. A subgroup $\Isom(\bbH^n)$ is parabolic if all of its elements are parabolic. When $G\le \Isom(\bbH^n)$ is discrete, torsion-free, and non-abelian we denote the limit set by $\Lambda G \subseteq S^{n-1}_\infty$ and the \emph{convex core} by $\CC(G) = \Hull(\Lambda G) / G$.
Suppose $A \subset \Isom^+(\bbH^n)$ is a finite set of isometries.
Define the \emph{minimum total displacement} $\tau(A)$ to be
\[ \tau(A) = \inf_{x\in\bbH^n} \sum_{a\in A} d(x, a.x). \]
Note that if $\gen{A}$ is not parabolic then $\tau(A) > 0$. This agrees with the previous definition when $A$ is a singleton. For a finitely generated group of isometries we define the minimum total displacement of $G$ to be the infimum over that of finite generating sets
\[ \tau(G) = \inf\{ \tau(A)\,|\,A\text{ is a finite generating set of }G\} \] 
and, as a short hand, if $M$ is a hyperbolic $n$-manifold so that $M=\bbH^n/G$ for a discrete subgroup $\pi_1(M)\simeq G\le \Isom(\bbH^n)$, we write $\tau(M) = \tau(G)$.
Since $M$ determines $G$ uniquely up to conjugation, and $\tau$ is conjugation invariant, $\tau(M)$ is well-defined.

\begin{lemma}\label{conjugacy finiteness}
Suppose $M$ is a closed hyperbolic $n$-manifold. Given $C \in \bbR$ there are finitely many conjugacy classes of subsets $A\le \pi_1(M) < \Isom^+(\bbH^n)$ such that $\tau(A) \le C$.
\end{lemma}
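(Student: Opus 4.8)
The plan is to reduce every qualifying subset to a conjugate lying inside one fixed finite subset of $\pi_1(M)$, after which finiteness is immediate. Write $\Gamma = \pi_1(M) < \Isom^+(\bbH^n)$, which acts properly discontinuously on $\bbH^n$, and for a finite set $A$ put $F_A(x) = \sum_{a\in A} d(x, a.x)$, so that $\tau(A) = \inf_{x\in\bbH^n} F_A(x)$ and $F_{gAg\ii}(g.x) = F_A(x)$ for $g\in\Gamma$. Since $M$ has finite volume it has finitely many cusps; I fix disjoint embedded horoball neighborhoods of the cusps and let $M_0$ be the (compact) complement. In $\bbH^n$ the cusps are represented by a $\Gamma$-orbit of pairwise-disjoint horoballs, and the stabilizer of a horoball $B_j$ centered at a parabolic point $\xi_j$ is the maximal parabolic (cusp) subgroup $P_j = \Stab_\Gamma(\xi_j)$, all of whose non-trivial elements are parabolic.

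First I would produce, for each finite $A$ with $\tau(A)\le C$ and $\gen{A}$ non-parabolic, a near-minimizing point confined to a fixed compact region. Choose any $x_A$ with $F_A(x_A) \le \tau(A) + 1 \le C+1$. The key geometric estimate is that if $x_A$ lies at depth $T$ inside a horoball $B_j$, then for every $a\in\Gamma\setminus P_j$ the geodesic from $x_A$ to $a.x_A$ must exit $B_j$ and enter the disjoint horoball $a.B_j$, forcing $d(x_A, a.x_A)\ge 2T$. Hence if $T > (C+1)/2$ then no element of $A$ can lie outside $P_j$, since a single such term would already exceed $F_A(x_A)$; thus $A\subseteq P_j$ and $\gen{A}\le P_j$ is parabolic, contradicting the hypothesis. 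Therefore $x_A$ has cusp-depth at most $T_0 := (C+1)/2$, so its image in $M$ lies in the compact set $K_0 := M_0 \cup \{\text{cusp points of depth} \le T_0\}$.

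Next I would conjugate into a fixed compact subset of $\bbH^n$. Fix a compact $\tilde K_0 \subset \bbH^n$ mapping onto $K_0$; then there is $g_A\in\Gamma$ with $g_A.x_A\in\tilde K_0$, and replacing $A$ by $A' = g_A A g_A\ii$ and $x_A$ by $y_A = g_A.x_A$ we still have $F_{A'}(y_A)\le C+1$ with $y_A\in\tilde K_0$. For each $a'\in A'$ we have $d(y_A, a'.y_A) \le F_{A'}(y_A) \le C+1$, so $a'.\tilde K_0$ meets the compact set $B_0 := \overline{N_{C+1}(\tilde K_0)}$. By proper discontinuity the set $S := \{\gamma\in\Gamma : \gamma.\tilde K_0 \cap B_0 \ne \emptyset\}$ is finite, and $A'\subseteq S$. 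Consequently every qualifying $A$ is conjugate to a subset of the fixed finite set $S$, of which there are at most $2^{|S|}$, which proves the finiteness claim.

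The routine inputs---proper discontinuity, compactness of $M_0$, and disjointness of the cusp horoballs---are standard; the one step carrying the real content is the confinement in the second paragraph, where the non-parabolic hypothesis is exactly what prevents a near-minimizing point from escaping into a cusp. I expect the main technical care to be in making the notion of cusp ``depth'' and the horoball displacement estimate $d(x_A, a.x_A)\ge 2T$ precise, and in noting the closed case ($M$ compact, no cusps) separately, where the confinement step is vacuous and one simply takes $K_0 = M$.
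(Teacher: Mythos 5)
Your proposal is correct and takes essentially the same approach as the paper: both confine a (near-)minimizing point of the displacement sum to a compact region by using the non-parabolic hypothesis together with the disjointness of the lifted cusp horoballs, then conjugate that point into a fixed compact set and invoke proper discontinuity to trap the conjugated set in a finite subset of $\pi_1(M)$. The only notable (and favorable) difference is that you work with near-minimizers $F_A(x_A) \le \tau(A)+1$, which sidesteps the paper's implicit assumption that the infimum $\tau(A)$ is actually attained.
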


\begin{proof}
Fix a base point $o\in \bbH^n$.
Let $A \subset \pi_1(M)$ be a set of elements such that $\tau(A) \le C$. 
Let $x\in \bbH^n$ be the point realizing $\tau(A)$. 
There is some element $h\in \pi_1(M)$ such that that $d(h.x, o) \le \diam(M) = D$. Conjugating by $h$,
for all $a\in A^h$ we have $d(o, a.o) \le 2D + C$. The action of $\pi_1(M)$ is properly discontinuous, so we conclude $A^h$ is contained in the finite set $\{ g\in \pi_1(M) \;|\; d(o, go) \le 2D + C\}$.
\end{proof}

In the sequel, we will apply \cref{conjugacy finiteness} in combination with the following theorem of Ohshika and Potyagailo (in light of the tameness theorem~\cites{agol, calegari-gabai} we remove the topologically tame hypothesis in our quotation).

\begin{theorem}[\cite{ohshika-potyagailo}*{Theorem 1.3(b)}]\label{conjugate equal}
Suppose $G \le \Isom^+(\bbH^3)$ is a discrete, non-elementary, torsion-free finitely generated group. Then for all $\alpha \in \Isom^+(\bbH^3)$, the inclusion $\alpha G \alpha^{-1} \subset G$ implies $\alpha G \alpha^{-1} = G$.
\end{theorem}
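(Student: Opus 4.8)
The plan is to recast the algebraic hypothesis geometrically as a self-covering. Write $H = \alpha G\alpha^{-1} \le G$ and set $N_G = \bbH^3/G$, $N_H = \bbH^3/H$. Conjugation by $\alpha$ carries $G$-orbits to $H$-orbits, so $\alpha$ descends to an isometry $\bar\alpha : N_G \to N_H$; meanwhile the inclusion $H \le G$ makes the identity of $\bbH^3$ descend to a Riemannian covering $p : N_H \to N_G$ of degree $[G:H]$. The composite $\Phi = p \circ \bar\alpha : N_G \to N_G$ is then a covering by local isometries of degree $d = [G:H] \in \{1,2,\dots\}\cup\{\infty\}$, and the theorem is equivalent to showing $\Phi$ is an isometry, i.e. $d = 1$. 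The point of this reduction is that, unlike an abstract self-embedding, this endomorphism is rigidly geometric. Note that $N_G$ is a complete orientable hyperbolic $3$-manifold with $G$ non-elementary, torsion-free and finitely generated; by the tameness theorem \cites{agol, calegari-gabai} it is homeomorphic to the interior of a compact manifold $\bar N$, and each of its ends is geometrically finite or simply degenerate.

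If $N_G$ has finite volume the conclusion is immediate: an infinite-degree cover has infinite volume, whereas $\bar\alpha$ exhibits $N_H \cong N_G$ of finite volume, so $d < \infty$; and a degree-$d$ cover multiplies volume by $d$, so $d\cdot\mathrm{vol}(N_G) = \mathrm{vol}(N_G)$ forces $d = 1$. Hence assume $\mathrm{vol}(N_G) = \infty$, where the crux is to first bound the degree. When $G$ is geometrically finite I would use finiteness of the conformal boundary: by Ahlfors' finiteness theorem $\partial_c N_G = (S^2_\infty \setminus \Lambda G)/G$ is a finite union of finite-type Riemann surfaces of finite total hyperbolic area, and $\bar\alpha$ identifies $\partial_c N_G$ conformally with $\partial_c N_H$, so an infinite-degree cover would yield an infinite-area conformal boundary, a contradiction. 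When $G$ is geometrically infinite, $N_G$ has a simply degenerate end (this is exactly where tameness is indispensable), and I would invoke the Covering Theorem of Thurston and Canary: an infinite-degree cover of a tame hyperbolic $3$-manifold possessing a degenerate end is finite-to-one near that end and forces the base to have finite volume, again contradicting $\mathrm{vol}(N_G) = \infty$. Either way $d < \infty$.

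With $d$ finite the covering $\Phi$ is proper, and using the product structure of the ends furnished by tameness it extends to a degree-$d$ covering $\bar\Phi : \bar N \to \bar N$ of the compact core, so that $\chi(\bar N) = d\,\chi(\bar N)$. Since $\mathrm{vol}(N_G) = \infty$, at least one component of $\partial\bar N$ has genus $\ge 2$ (a non-toral geometrically finite end, or a degenerate end, each carrying a surface of negative Euler characteristic), so $\chi(\bar N) = \tfrac12\chi(\partial\bar N) < 0$ and therefore $d = 1$, which finishes the argument. I expect the degree bound $d < \infty$ in the geometrically infinite case to be the main obstacle: there neither the hyperbolic volume nor the area of the conformal boundary is finite, so the only available leverage is the rigidity of degenerate ends, and it is precisely at this step that tameness (guaranteeing the ends are tame so that the Covering Theorem applies) together with the Covering Theorem itself carry the weight of the proof.
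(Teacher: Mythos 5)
The first thing to say is that the paper has no proof of \cref{conjugate equal} to compare against: the statement is imported verbatim from Ohshika--Potyagailo \cite{ohshika-potyagailo}, with the tameness theorem \cites{agol, calegari-gabai} invoked only to delete their topological-tameness hypothesis. So your proposal is an attempt to reprove a cited theorem, and it must be judged on its own. Your reduction to a self-covering $\Phi \colon N_G \to N_G$ of degree $d = [G : \alpha G \alpha^{-1}]$ is correct, the finite-volume case is correct, and the Euler-characteristic endgame is correct once $d < \infty$ is known (one can also phrase it group-theoretically: $\chi(H) = d\,\chi(G)$ for finite-index subgroups, and $\chi(G) = \tfrac12\chi(\partial \bar N) < 0$ when $\mathrm{vol}(N_G) = \infty$). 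The trouble is with both degree bounds, which you yourself identify as the crux. The geometrically finite step is not merely under-justified but fallacious: the covering $p \colon N_H \to N_G$ does \emph{not} induce a covering of conformal boundaries. Since $\Lambda H \subseteq \Lambda G$, hence $\Omega(G) \subseteq \Omega(H)$, what $p$ extends to is a covering $\Omega(G)/H \to \Omega(G)/G$, and $\Omega(G)/H$ is only an open subdomain of $\partial_c N_H = \Omega(H)/H$. Ahlfors finiteness bounds the area of $\Omega(H)/H$ in its own Poincar\'e metric, while the covering argument gives infinite area of $\Omega(G)/H$ in the Poincar\'e metric of $\Omega(G)$; by the Schwarz lemma the inclusion $\Omega(G) \subseteq \Omega(H)$ is metric-decreasing, so the inequality runs the wrong way. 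Concretely, take any finitely generated infinite-index subgroup $H$ of a Schottky group $G$: then $\Omega(G)/H \to \Omega(G)/G$ is an infinite-degree covering of infinite intrinsic area, yet $H$ is again a Schottky group and $\partial_c N_H$ is a closed surface of finite area. So ``infinite degree $\Rightarrow$ infinite-area conformal boundary'' is false, and the geometrically finite, infinite-volume case (which includes Schottky groups, so is not marginal) is left unproved; repairing it requires a genuinely different mechanism that actually exploits $N_H \cong N_G$.

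The geometrically infinite step misstates the covering theorem. Canary's theorem is a dichotomy: \emph{either} the simply degenerate end of the cover has a neighborhood $U$ on which $p$ is finite-to-one, \emph{or} the base has finite volume and the cover is a virtual fiber. When $\mathrm{vol}(N_G) = \infty$ you only get the first horn, and ``$p|_U$ finite-to-one'' does not bound the global degree of $p$: a degree-$d$ covering is $d$-to-one at every point, whereas finiteness on $U$ only counts preimages lying in $U$, which is compatible a priori with $d = \infty$. Promoting the first horn to $[G:H] < \infty$ is exactly where the real work sits: one must show that the simplicial hyperbolic (or pleated) surfaces exiting the degenerate end of $N_H$ have images exiting a degenerate end of $N_G$, that $p$ then restricts to a finite covering between end neighborhoods, and that such a neighborhood carries a finite-index subgroup of $G$ --- the last point being delicate precisely when the degenerate end is compressible (e.g.\ geometrically infinite free groups on the boundary of Schottky space), which the hypotheses of \cref{conjugate equal} allow. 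This chain of arguments is the substance of the cited proof; in your write-up it is compressed into an incorrect one-sentence paraphrase of the covering theorem, so the conclusion $d < \infty$ is asserted rather than proved. In short: the skeleton (covering reformulation, case split, Euler characteristic) is sound and parallels how such results are actually proved, but the two infinite-volume degree bounds --- the heart of the theorem --- have genuine gaps, one of them an outright invalid inference.
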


Note that this implies the analogous statement for subgroups of $\Isom^+(\bbH^2)$, originally due to Huber~\cite{huber}.

\section{Surface groups}

Our primary tool for working with surface groups will be various moduli spaces of hyperbolic metrics.  For an oriented surface $\Sigma$ of genus $g$ with $b$ boundary circles we denote its \emph{moduli space} of complete, finite-area hyperbolic metrics with totally geodesic boundary $\calM_{g,b}$ or $\calM(\Sigma)$; when $b=0$ we omit the index. We need the following consequence~\cite{primer}*{\S12.4.3} of Mumford's compactness criterion~\cite{mumford} in order to prove that certain collections of surfaces land in a compact part of a fixed moduli space.
\begin{fact} \label{compactness criterion}
Suppose $\Sigma$ is a finite-type surface with $\chi(\Sigma) < 0$. For all
$\epsilon > 0$ the subset $\calM^\epsilon(\Sigma) \subset \calM(\Sigma)$ consisting of hyperbolic structures where all essential arcs and simple closed curves have length at least $\epsilon$ is compact.
\end{fact}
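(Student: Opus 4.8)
The plan is to prove that $\calM^\epsilon(\Sigma)$ is sequentially compact; since $\calM(\Sigma)$ is metrizable this suffices. So I would fix a sequence $[X_n] \in \calM^\epsilon(\Sigma)$ and manufacture a subsequence converging inside $\calM^\epsilon(\Sigma)$. The first move is to reduce to the case of empty boundary by doubling: gluing two copies of $X_n$ along their geodesic boundary produces a complete finite-area hyperbolic surface $DX_n$ \emph{without} boundary, of a fixed topological type, carrying an isometric reflection involution whose fixed locus is the original boundary. A lower bound on the lengths of essential simple closed curves and essential arcs of $X_n$ is inherited as a systole lower bound on $DX_n$: a short closed geodesic of $DX_n$ either is disjoint from the fixed locus, where it is freely homotopic to a short curve of $X_n$, or it crosses the fixed locus, where it is forced to run along a short essential arc of $X_n$. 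Hence a convergent subsequence of the $DX_n$ in $\calM(D\Sigma)$ may be chosen so that the reflection involutions also converge (they have uniformly bounded geometry), and the limiting involution exhibits the limit as the double of a limit $X_\infty$, giving $[X_n] \to [X_\infty]$ in $\calM^\epsilon(\Sigma)$.

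It then remains to treat $\Sigma$ without boundary (closed or with cusps), where the hypothesis is exactly the systole bound $\mathrm{sys}(X_n) \ge \epsilon$, so the whole content is a matching \emph{upper} bound. For this I would invoke Bers's theorem: each $X_n$ admits a pants decomposition by simple closed geodesics each of length at most a constant $B = B(\Sigma)$ depending only on the topological type of $\Sigma$. Combined with the hypothesis, every curve of the decomposition has length in $[\epsilon, B]$. Since there are only finitely many pants decompositions of $\Sigma$ up to the mapping class group, after applying mapping classes—which fix the point in moduli space—and passing to a subsequence I may assume all $X_n$ carry pants decompositions of a single topological type, recorded by Fenchel--Nielsen coordinates $(\ell_j, \theta_j)$ relative to one fixed marked decomposition.

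The extraction is then coordinatewise. The length coordinates satisfy $\ell_j(X_n) \in [\epsilon, B]$, a compact interval. Each twist coordinate $\theta_j$ is only defined up to the Dehn twist about the $j$-th curve, which changes it by $\ell_j(X_n) \le B$; applying such twists (again without moving the moduli point) I may assume $\theta_j(X_n) \in [0, \ell_j(X_n)) \subseteq [0, B)$. Thus all Fenchel--Nielsen coordinates lie in a fixed compact box, and a subsequence converges to a tuple whose length coordinates are all $\ge \epsilon > 0$, hence to an honest hyperbolic structure $X_\infty$ with $[X_n] \to [X_\infty]$ in $\calM(\Sigma)$. Because the geodesic-length function of each simple closed curve and essential arc is continuous, the bounds $\ge \epsilon$ pass to the limit, so $[X_\infty] \in \calM^\epsilon(\Sigma)$.

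The main obstacle is the source of the upper bound. The lower bounds are handed to us by hypothesis, but compactness genuinely requires Bers's constant together with the finiteness of pants decompositions modulo the mapping class group; in the bordered case the crucial point is the observation that it is precisely the essential-arc lengths (not merely the simple-closed-curve lengths) that prevent $\Sigma$ from degenerating by pinching a thin neck between boundary components, which is what guarantees the systole of the double is controlled. Verifying that every short geodesic of $DX_n$ crossing the fixed locus shadows a short essential arc of $X_n$ is the step I would write out most carefully.
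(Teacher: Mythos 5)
Your argument for the unbordered (closed or cusped) case is correct, and it is in fact the very proof the paper is pointing at: the paper gives no proof of this Fact at all, quoting it from Mumford~\cite{mumford} via Farb--Margalit~\cite{primer}, and the argument there is exactly your combination of Bers's constant, finiteness of pants decompositions modulo the mapping class group, Fenchel--Nielsen coordinates trapped in a compact box, and continuity of length functions. The genuine gap is in your doubling reduction, and it sits precisely in the case your dichotomy omits: a closed geodesic of $DX_n$ need not be disjoint from the fixed locus or cross it transversally --- it can be a \emph{component} of the fixed locus, i.e.\ a doubled boundary geodesic of $X_n$. Lower bounds on essential arcs and on essential (non-boundary-parallel) simple closed curves of $X_n$ give no lower bound on the boundary lengths of $X_n$; quite the opposite, by the collar lemma a very short boundary geodesic forces every essential arc to be very long. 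So the systole of $DX_n$ is not controlled by the quantities you invoke, and the extraction of a convergent subsequence of the doubles fails at the first step.

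This is not a repairable slip within your reading of the hypothesis, because under that reading the Fact itself is false. Take $\Sigma$ a pair of pants, so $\calM(\Sigma)\cong \bbR_{>0}^3$ (the boundary lengths) up to a finite group, and there are no essential non-boundary-parallel simple closed curves at all. Letting all three boundary lengths equal $L\to 0$, every essential arc has length tending to infinity (e.g.\ the seams satisfy $\cosh s = (\cosh(L/2)+\cosh^2(L/2))/\sinh^2(L/2)\to\infty$), so these surfaces all lie in $\calM^\epsilon(\Sigma)$, yet the sequence has no convergent subsequence in $\calM(\Sigma)$. The statement is only true --- and is only ever applied in the paper --- with ``simple closed curves'' including the boundary-parallel ones, i.e.\ with the boundary geodesics themselves of length at least $\epsilon$; in both applications (convex cores inside a closed hyperbolic surface, boundaries of convex cores in a finite-volume hyperbolic $3$-manifold) the boundary curves are closed geodesics of the ambient space, so this stronger hypothesis holds there. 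With that reading your doubling argument does close up: geodesics contained in the fixed locus have length $\ge\epsilon$ by hypothesis; geodesics disjoint from it are essential non-peripheral closed geodesics of one copy; and a geodesic crossing it must cut out at least one essential arc, since otherwise it would be freely homotopic into the fixed locus and hence, by uniqueness of geodesic representatives, equal to a power of a fixed curve, contradicting transversality --- this is the careful version of the step you flagged. Two further items would still need writing out: the convergence of the reflection involutions (Arzel\`a--Ascoli for isometries of converging metrics, plus the fact that $C^1$-close finite-order diffeomorphisms are conjugate, so the limit is again a double); alternatively, one can avoid doubling entirely and run Fenchel--Nielsen directly on the bordered surface, since the inclusive hypothesis bounds boundary lengths below and an area/collar argument bounds them above.
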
 
\cref{compactness criterion} allows us to bound $\tau$. Indeed, for a fixed $a\in \pi_1(\Sigma)$, the function $\mathbb{H}^2\times \calM(\Sigma) \to \mathbb{R}$ induced by $d(x,a.x)$ is continuous. Thus $\tau$ is the infimum of a family of continuous functions on $\calM(S)$, namely $\tau(A)$ for a generating set $A$, and we conclude $\tau$ is bounded on any compact set. We will appeal to this in the sequel without further comment.

\begin{theorem}\label{no ascending in surfaces}
Let $S$ be a compact 2-orbifold with a finite surface cover and let $r\in\bbN$. Every ascending chain of free groups of constant rank in $\pi_1(S)$ stabilizes.
\end{theorem}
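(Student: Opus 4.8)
The plan is to run the same three-step skeleton that \cref{no ascending in hyperbolic manifolds} will use---bound the minimal total displacement of each chain element, deduce finitely many conjugacy classes, and then upgrade conjugacy to equality---with Mumford's criterion (\cref{compactness criterion}) supplying the displacement bound. First I would dispose of the degenerate cases. If $S$ is spherical then $\pi_1(S)$ is finite and there is nothing to prove; if $S$ is Euclidean then $\pi_1(S)$ is virtually $\bbZ^2$, every free subgroup is cyclic or trivial, and an ascending chain of infinite cyclic subgroups of a finitely generated virtually abelian group stabilizes. So assume $S$ is hyperbolic and fix an identification $\pi_1(S) = \Gamma < \Isom^+(\bbH^2)$ with $S = \bbH^2/\Gamma$ a compact orbifold; let $\ell_0 > 0$ be the length of the shortest closed geodesic of $S$. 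The rank $r=1$ case is handled directly: if $\gen{g} = H_i \le H_{i+1} = \gen{h}$ then $g$ is a power of $h$, so $\tau(g) \ge \tau(h)$; the translation lengths form a non-increasing sequence bounded below by $\ell_0$, and discreteness of $\Gamma$ leaves only finitely many elements below any length, so the chain stabilizes. Now assume $r \ge 2$. Applying \cref{no-free-factors} I may pass to a chain, still non-stabilizing, in which $H_i$ is never contained in a proper free factor of $H_{i+1}$; by \cref{base not in free factor} then $H_1$ is contained in no proper free factor of any $H_i$.

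Each $H_i$ is a finitely generated, torsion-free, non-elementary Fuchsian group, hence geometrically finite, and its convex core $C_i = \Hull(\Lambda H_i)/H_i$ is a complete finite-area hyperbolic surface with $\chi(C_i) = 1 - r$, so $\Area(C_i) = 2\pi(r-1)$ by Gauss--Bonnet. There are only finitely many topological types with this Euler characteristic, so after passing to a subsequence I may assume all $C_i$ have a fixed type $\Sigma$ and regard each $C_i$ as a point of the single moduli space $\calM(\Sigma)$. The key claim is that these points all lie in a fixed compact part $\calM^\epsilon(\Sigma)$. A shortest essential simple closed geodesic of $C_i$ is a primitive hyperbolic element of $H_i \le \Gamma$, so its length is at least $\ell_0$; this bounds the closed-curve lengths below uniformly. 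Granting the analogous lower bound on essential arcs, \cref{compactness criterion} places every $C_i$ in a fixed compact $\calM^\epsilon(\Sigma)$, on which the continuous function $\tau$ is bounded by some $C$; choosing generators of $H_i$ along a spine of the (then uniformly bounded-diameter) surface $C_i$ yields $\tau(H_i) \le C$ for all $i$.

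I expect the \textbf{main obstacle} to be exactly this arc bound---equivalently, ruling out that the cores $C_i$ degenerate by developing an arbitrarily short essential arc (two funnels pinching together, or a long boundary flare). This is where I would spend the standing reduction: a short essential arc in $C_{i+1}$ separating off the image of $C_i$ would exhibit $H_i$ inside a proper free factor of $H_{i+1}$, contradicting the output of \cref{no-free-factors}. Thus the not-a-free-factor hypothesis is precisely what keeps the chain of cores in a compact region, playing the role that the ``at least $r$ edges'' lower bound plays in the Kapovich--Myasnikov argument.

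Finally I would convert the displacement bound into stabilization. Since $\Gamma$ acts properly discontinuously on $\bbH^2$, \cref{conjugacy finiteness} (whose proof uses only proper discontinuity and a compact thick part, and so applies verbatim to the compact orbifold $S$) gives finitely many $\Gamma$-conjugacy classes of generating sets $A$ with $\gen{A}$ non-parabolic and $\tau(A) \le C$, hence finitely many $\Gamma$-conjugacy classes among the $H_i$. Thus some infinite set of indices $J$ has all $H_j$ for $j \in J$ pairwise conjugate in $\Gamma$. For $i < j$ in $J$ write $H_j = \gamma H_i \gamma\ii$ with $\gamma \in \Gamma$; then $\gamma\ii H_i \gamma \le H_i$, so the $\bbH^2$ case of \cref{conjugate equal} (Huber) forces $\gamma\ii H_i \gamma = H_i$ and hence $H_i = H_j$. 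As the chain is ascending and $J$ is cofinal, it stabilizes, contradicting our standing assumption and completing the proof.
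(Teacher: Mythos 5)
Your overall strategy is the paper's own (convex cores viewed in a fixed moduli space, Mumford compactness to bound $\tau$, then \cref{conjugacy finiteness} and \cref{conjugate equal}), but the step you yourself single out as the crux --- the uniform lower bound on lengths of essential arcs in the cores $C_i$ --- is not actually established, and the mechanism you propose for it fails as stated. You argue that a \emph{short} essential arc in $C_{i+1}$ would ``separate off the image of $C_i$'' and hence place $H_i$ in a proper free factor. But shortness of an arc does not make it disjoint from the immersed core: an essential arc, however short, may cross the image of $C_i$ (or of $C_1$), and then cutting along it produces no free splitting containing $H_i$. The free-factor hypothesis only rules out essential arcs that can be homotoped off the immersed image; by itself it says nothing about the length of arcs that do cross it. The missing ingredient, which is exactly what the paper supplies, is that the map $C_1 \immerse C_i$ induced by $\Hull(\Lambda H_1)\subseteq \Hull(\Lambda H_i)$ is a \emph{locally isometric} immersion, so that when an essential arc $a$ of $C_i$ meets the image of $C_1$ essentially and transversely, some component of its preimage is an essential arc of $C_1$ of length at most $\ell(a)$; this is what yields $\alpha(C_i)\ge \alpha(C_1)>0$. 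The correct dichotomy is: either $a$ can be pushed off the image of $C_1$, contradicting \cref{base not in free factor}, or it crosses essentially and the pullback bounds its length from below. Without this isometric-immersion/pullback step, your compactness claim, and hence the bound $\tau(H_i)\le C$, is unproven.

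Two smaller points. Your trichotomy spherical/Euclidean/hyperbolic with $S=\bbH^2/\Gamma$ compact implicitly assumes $S$ is closed and orientable: compact hyperbolic $2$-orbifolds with boundary (whose fundamental groups are virtually free) and non-orientable orbifolds are not covered by your setup. The paper disposes of these at the outset by passing to a finite orientable surface cover (intersecting the chain and applying the pigeonhole principle) and then quoting Takahasi--Higman when boundary is present; you would need the same reduction, or else to rework your argument for convex-cocompact subgroups of $\Isom(\bbH^2)$. Relatedly, \cref{conjugacy finiteness} is stated for manifolds; your parenthetical claim that it applies verbatim to a closed orbifold is plausible but is an extra verification the paper avoids entirely by the covering trick. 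The rest of your argument --- the rank-one case, the pigeonhole on homeomorphism type, and the endgame via \cref{conjugacy finiteness} and Huber --- matches the paper's proof.
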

\begin{proof}
The orbifold $S$ has a finite cover $S'$ which is an orientable surface. By taking intersections, an ascending chain of free groups of constant rank in $\pi_1(S)$ will give rise to an ascending chain of free groups of bounded rank in $\pi_1(S')$, which by the pigeonhole principle contains an ascending subsequence of free groups of constant rank. This reduces the proof to the case that $S$ is an orientable surface. Furthermore, if $S$ has boundary, then $\pi_1(S)$ is free, and the result follows from Takahasi or Higman~\cites{takahasi,higman}. If $S$ has genus zero or one there is nothing to prove. Thus, we may assume that $S$ is a closed orientable hyperbolic surface.

Let $G=\pi_1(S)$, and let $H_1\le H_2 \le \ldots$ be an ascending chain of free subgroups of rank $r$ in $G$. By \cref{no-free-factors} assume further that each $H_i$ is not contained in a free factor of $H_{i+1}$. If $r=1$ let $H_i = \langle \alpha_i\rangle$. Since $\tau(\alpha_{i+1}) \le \tau(\alpha_i)$ and translation lengths are bounded below by the girth of $S$ the chain must stabilize. 

Now suppose $r\geq 2$. Note that each $H_i$ is a discrete group of isometries of $\bbH^2$. Let $S_i = \CC(H_i)$. Since the rank of $H_i$ is fixed, by the pigeonhole principle, after passing to a subsequence we may assume that all $S_i$ are homeomorphic to a fixed surface $\Sigma$.
Since $H_1\le H_i$ we have $\Lambda H_1\subseteq \Lambda H_i$; after passing to the quotient this produces an isometric immersion $f_i: S_1 \immerse S_{i}$.
Let $\alpha(S_i)$ be the length of the shortest essential arc $\gamma_i$ in $S_i$. 
By \cref{base not in free factor} $H_1$ is not contained in a free factor of $H_i$, 
and in particular every map $f'_i:S_1\to S_i$ homotopic to $f_i$ must satisfy $f_i'(S_1)\cap \gamma_i \ne \emptyset$.
Therefore, the pre-image $f_i^{-1}(\gamma_i)$ contains an essential arc of $S_1$ and we conclude $\alpha(S_i) \ge \alpha(S_1) >0$.
Moreover, the length of the shortest essential curve on each $S_i$ is bounded below by the length of the shortest curve of $S$.

We conclude that the hyperbolic surfaces $S_i$ are contained in a compact subset of $\calM(\Sigma)$, 
so there is a uniform bound on $\tau(S_i)$. 
Let $A_i \subset H_i$ be a generating set realizing $\tau(S_i)$. By \cref{conjugacy finiteness} and the pigeonhole principle there exists an infinite set of indicies $J$ such that for all $i\le j\in J$ the sets $A_i$ and $A_j$ are conjugate in $\pi_1(S)$, and therefore $H_i$ is a conjugate of $H_j$ and a subgroup of $H_j$. Hence $H_i = H_j$ for all $i, j\in J$ by \cref{conjugate equal}, and the chain stabilizes.
\end{proof}

\begin{remark}\label{remark about surface subgroups}
Every subgroup of a surface group is either free or a closed surface group of strictly greater rank. Therefore, \cref{no ascending in surfaces} implies that every ascending chain of constant rank subgroups in a fixed orbifold group stabilizes. 
\end{remark}

\section{Graph manifolds}

Recall that a group is \emph{slender} if every subgroup is finitely generated. Equivalently, every ascending chain of subgroups stabilizes.

\begin{lemma}\label{extension lemma}
Let $1\to N \to G \to S \to 1$ be a short exact sequence of groups. If $N$ is slender and every ascending chain of constant rank subgroups of $S$ stabilizes then every ascending chain of constant rank subgroups of $G$ stabilizes.
\end{lemma}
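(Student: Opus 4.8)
The plan is to track an ascending chain $H_1 \le H_2 \le \cdots \le G$ of subgroups of constant rank $r$ through the two maps of the short exact sequence, using slenderness of $N$ to control the intersections with $N$ and the chain condition in $S$ to control the images. Write $\pi\colon G \to S$ for the quotient map. The guiding principle is that a subgroup $H \le G$ is determined by the pair $(H \cap N, \pi(H))$: if $H \le H'$ share both the same intersection with $N$ and the same image in $S$, then $H = H'$. So it suffices to show that both $H_i \cap N$ and $\pi(H_i)$ are eventually constant.

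First I would stabilize the kernels. The groups $H_i \cap N$ form an ascending chain of subgroups of $N$; since $N$ is slender, this chain stabilizes, giving an index $i_1$ with $H_i \cap N = H_{i_1} \cap N$ for all $i \ge i_1$. Note that no hypothesis on rank is needed here, since slenderness gives stabilization for \emph{every} ascending chain of subgroups of $N$.

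Next I would stabilize the images. The groups $\pi(H_i)$ form an ascending chain in $S$, and since $\pi(H_i)$ is generated by the images of a generating set of $H_i$, we have $\rk(\pi(H_i)) \le \rk(H_i) = r$. Thus the ranks lie in the finite set $\{0, 1, \dots, r\}$, and by the pigeonhole principle some value is attained infinitely often; the corresponding indices yield an ascending chain of constant rank in $S$, which stabilizes by hypothesis. Since a subsequence of an ascending chain stabilizes precisely when the whole chain does, the chain $\pi(H_i)$ stabilizes, giving an index $i_0$ with $\pi(H_i) = \pi(H_{i_0})$ for all $i \ge i_0$.

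Finally, for $i \ge \max(i_0, i_1)$ both invariants are constant, and I would finish with the diagram chase behind the guiding principle: given $g \in H_{i+1}$, choose $h \in H_i$ with $\pi(h) = \pi(g)$ (possible since $\pi(H_{i+1}) = \pi(H_i)$); then $gh^{-1} \in N \cap H_{i+1} = N \cap H_i \le H_i$, so $g \in H_i$ and hence $H_{i+1} = H_i$. The argument is largely routine, with no need for $H_i$ to be free; the only point demanding care is the rank bookkeeping for the images, where the chain has merely bounded rather than constant rank, so I would be deliberate about the pigeonhole reduction to a constant-rank subsequence and about the fact that stabilization passes between a chain and its subsequences.
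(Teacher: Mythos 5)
Your proof is correct and follows essentially the same route as the paper's: stabilize the kernels $H_i \cap N$ by slenderness, stabilize the images $\pi(H_i)$ by the hypothesis on $S$, and finish with the diagram chase that the paper compresses into an appeal to the short five lemma. You are in fact more careful than the paper on one point: the image chain has only bounded, not constant, rank, and your pigeonhole reduction to a constant-rank subsequence (together with the observation that stabilization passes between an ascending chain and its subsequences) makes explicit a step the paper's one-line proof glosses over.
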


\begin{proof}
Let $H_1\le H_2 \le \ldots \le G$ be an ascending chain of constant rank subgroups of $G$. The chain $H_i \cap N$ stabilizes because $N$ is slender, and so does the image chain $\bar{H}_i \le S$ by hypothesis. By the short five lemma, the chain $H_i$ also stabilizes. 
\end{proof}

\begin{corollary}\label{no ascending in SFS}
Let $M$ be a Seifert-fibered 3-manifold. Every ascending chain constant rank subgroups in $\pi_1(M)$ stabilizes. 
\end{corollary}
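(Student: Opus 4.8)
The plan is to realize $\pi_1(M)$ as a group extension to which \cref{extension lemma} applies, with the fundamental group of the base orbifold playing the role of the quotient. Every Seifert-fibered $3$-manifold $M$ carries the standard short exact sequence
\[ 1 \to N \to \pi_1(M) \to \pi_1^{\mathrm{orb}}(B) \to 1, \]
where $N = \gen{h}$ is the normal cyclic subgroup generated by a regular fiber $h$ and $B$ is the base $2$-orbifold. The kernel $N$ is either infinite cyclic or finite cyclic; in both cases it is \emph{slender}, since every subgroup of a cyclic group is finitely generated. This exact sequence is valid whether or not $M$ and $B$ are orientable (the fiber need not be central in the twisted cases, but it is always normal with cyclic image), and supplying it is the only external structural input the argument requires.

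First I would dispose of the case that $B$ is a \emph{bad} orbifold. The bad compact $2$-orbifolds are exactly the teardrop $S^2(n)$ and the spindle $S^2(m,n)$ with $m \neq n$, and a direct presentation computation shows $\pi_1^{\mathrm{orb}}(B)$ is finite in each (trivial for the teardrop, cyclic of order $\gcd(m,n)$ for the spindle). In this situation $\pi_1(M)$ is an extension of a finite group by a cyclic group, hence virtually cyclic, hence slender; so \emph{every} ascending chain of subgroups of $\pi_1(M)$ stabilizes, with or without a rank constraint. The genuinely degenerate cases with $\pi_1(M)$ finite (e.g.\ $S^3$ and lens spaces) are absorbed here, since finite groups are trivially slender and any extension by them is harmless.

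In the remaining principal case $B$ is a \emph{good} compact $2$-orbifold, so \cref{no ascending in surfaces} applies to $\pi_1^{\mathrm{orb}}(B)$, and together with \cref{remark about surface subgroups} it shows that every ascending chain of constant-rank subgroups of $\pi_1^{\mathrm{orb}}(B)$ stabilizes. With $N$ slender and the quotient satisfying the constant-rank ascending chain condition, \cref{extension lemma} directly yields that every ascending chain of constant-rank subgroups of $\pi_1(M)$ stabilizes, completing the proof.

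The main points requiring care are structural rather than quantitative: one must cite the Seifert exact sequence in the form above, verify that $N$ is cyclic and therefore slender in all cases, and correctly separate the bad-orbifold and finite-$\pi_1$ cases—where \cref{no ascending in surfaces} does not directly apply but slenderness of $\pi_1(M)$ settles the matter outright—from the good-orbifold case. Once these are in place there is no estimate to perform; \cref{extension lemma} does all the work, precisely as it was engineered to do.
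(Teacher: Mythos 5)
Your proof is correct and takes essentially the same route as the paper's: the Seifert exact sequence with cyclic (hence slender) fiber kernel, \cref{no ascending in surfaces} together with \cref{remark about surface subgroups} applied to the base orbifold group, and \cref{extension lemma} to conclude. Your separate treatment of bad base orbifolds (where the base group is finite and $\pi_1(M)$ is virtually cyclic, hence slender) is a point the paper's proof glosses over by simply asserting the base is a good $2$-orbifold, so this extra care is a harmless, and arguably welcome, refinement of the same argument.
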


\begin{proof}
Let $M$ be a Seifert-fibered 3-manifold, and let $G=\pi_1(M)$. Thus, $G$ fits into an exact sequence
\[ 1\to N \to G \to S \to 1\] where $N\simeq \bbZ$ is the fiber group and $S$ is the fundamental group of a 2--orbifold with finite surface cover~\cite{Scott1983}*{\S1-3}. Note that $N\simeq \bbZ$ is slender, and every ascending chain of constant rank subgroups of $S$ stabilizes by \cref{no ascending in surfaces,remark about surface subgroups}. The corollary follows from \cref{extension lemma}.
\end{proof}

\begin{remark}
    A similar proof works for 3-manifolds with Sol geometry as they are virtually $\bbZ^2$-by-$\bbZ$ groups.
\end{remark}

\begin{proof}[Proof of \Cref{main result for graph manifolds}]
Suppose $M$ is a closed graph manifold. As in the orbifold portion of the surface case we may pass to a finite-sheeted cover, so assume $M$ is orientable. 
Since $M$ is a graph manifold, $M$ decomposes along incompressible tori into Seiferet-fibered pieces. This decomposition splits $\pi_1(M)$ as an $2$-acylindrical graph of groups with abelian edge groups \cite{lafont}*{Proposition 8.2}. The theorem now follows from \cref{main result for graph of groups,no ascending in SFS}.
\end{proof}

\section{Hyperbolic 3-manifold groups}

\begin{proof}[Proof of \Cref{main result for hyperbolic}]
As in the surface setting by passing to the orientation double cover we may assume without loss that $M$ is orientable. Let $G=\pi_1(M)$, and let $H_1\le H_2 \le \ldots \le G$ be an ascending chain of free groups of rank $r$. By \cref{no-free-factors} assume further that each $H_i$ is not contained in a free factor of $H_{i+1}$.

Since each $H_i$ is finitely generated it is
a consequence of the tameness theorem for hyperbolic 3-manifolds that $H_i$ is either geometrically finite or a virtual fiber~\cites{agol, calegari-gabai}. 
Since each $H_i$ is free, it cannot be a virtual fiber (since virtual fibers are fundamental groups of closed surfaces, and hence not free). 
Therefore every $H_i$ is geometrically finite.

We will show that there is a uniform upper bound for $\tau(H_i)$. The theorem follows from this bound via the pigeonhole principle, \cref{conjugacy finiteness,conjugate equal} as in the surface case.

In order to bound $\tau(H_i)$ we consider two cases depending on the limit sets of $H_i$. 

If there is a round circle $C \subset \partial \bbH^3$ such that the limit sets $\Lambda H_i \subseteq C$ for all $i$, then each $H_i$ acts on a hyperbolic plane $\bbH^2 = \Hull(C)$ and each convex core $S_i = CC(H_i)$ is a hyperbolic surface. By the pigeonhole principle we may pass to a subsequence and assume all $S_i$ are homeomorphic to a fixed surface $\Sigma$. As in the proof of \cref{no ascending in surfaces} the length $\alpha(S_i)$ of the shortest essential arc in each $S_i$ is bounded below by $\alpha(S_1) > 0$. Since there is a lower bound on the length of each element of $G$, there is a lower bound on the girth of each $S_i$. Therefore the $S_i$ land in a compact subset of the moduli space of $\Sigma$, which implies there is a uniform upper bound bound on $\tau(H_i)$.

Otherwise, after passing to a tail sequence, no limit set $\Lambda H_i$ is contained in a round circle. Let $M_i = \CC(H_i)$ and consider the boundary surfaces $\partial M_i$. Each $\partial M_i$ is a closed connected hyperbolic surface in the intrinsic metric~\cite{epstein-marden}*{Theorem II.1.12.1}. Each $M_i$ is a homeomorphic to a handlebody of genus $r$.
Each hyperbolic surface $\partial M_i$ determines a point in the moduli space $\calM_{r}$. 
First we will show that the sequence $\{\partial M_i\}$ is contained in a compact subset of $\calM_{r}$.
 
For a contradiction suppose there exists a sequence $\gamma_i \subset \partial M_i$ of essential simple closed curves, geodesic in the intrinsic metric, such that the lengths $\ell(\gamma_i)\to 0$. The curves $\gamma_i$ must eventually be nullhomotopic in $M$. Indeed, for all $\gamma_i$, we have $\ell(\gamma_i) \ge \tau([\gamma_i])$ which is bounded below for non-identity elements of $G$. 
Each $M_i$ is a quotient of a convex hull in $\bbH^3$, so $\partial M_i$ is mean convex. Thus, by a theorem of Meeks and Yau~\cite{meeks-yau}*{Theorem 10}, there exists a minimum area properly embedded disk $D_i\subset M_i$ with $\partial D_i = \gamma_i$. Further, by convexity, the isoperimetric inequality of $\bbH^3$ holds also for $M_i$, thus $\ell(\gamma_i)\to 0$ implies $\Area(D_i)\to 0$.

To arrive at a contradiction, we will show that $\Area(D_i)$ is bounded away from zero. Since $H_1 \le H_{i}$, we obtain an immersion $\rho_i: M_1 \immerse M_i$ as in the proof of \cref{no ascending in surfaces}. By \cref{base not in free factor} $H_1$ is not contained in a free factor of $H_i$, so $\rho_i$ cannot be homotoped away from $D_i$, i.e. for every map $\rho':M_1\to D_i$ homotopic to $\rho_i$, we have $\rho'(M_i)\cap D_i\ne \emptyset$.
By perturbing $\rho_i$ slightly if needed, we may assume that $\rho_i(\partial M_1)$ and $D_i$ are transverse. So $\partial M_1 \cap \rho\ii(D_i)$ is a disjoint collection of simple closed curves on $\partial M_1$. 
There must be at least one curve $\eta$ of $\partial M_1 \cap \rho\ii(D_i)$ which is essential in $M_1$ (as otherwise, $\rho_i(M_1)$ could be homotoped away from $D_i$). 
Its image $\eta':=\rho_i(\eta)$ bounds a disk $E'$ in $D_i$. 
The disk $E'$ is necessarily a minimal filling disk for $\eta'$ since $D_i$ was minimal.
It follows that the lift $\tild E'$ of $E'$ to the universal cover of $M_i$ must lie in the convex hull of $\tild \eta'$ (as otherwise, nearest point projection of $\tild E'$ will be a smaller area disk).
Therefore, there exists a disk $E$ filling $\eta$ in $M_1$ such that $\rho_i(E)=E'$ and since $\rho_i$ is a local isometry $\Area(E) =\Area(E')\leq \Area(D_i)$. 
The minimum area of a compression disk for $M_1$ is positive and a lower bound for $\Area(D_i)$. This is a contradiction; thus the systole of $\partial M_i$ is uniformly bounded away from $0$ for all $i$; that is $\partial M_i$ is contained in a compact subset of $\calM_{r}$.

The inclusion $\partial M_i \to M_i$ induces a surjection $f_i : \pi_1(\partial M_i) \to H_i$.
For each $\alpha\in\pi_1(\partial M_i)$, and $x\in\partial\Hull(\Lambda H_i) = \partial\widetilde{ M}_i$, convexity implies $d_{\bbH^3}(f_i(\alpha).x, x) \leq d_{\partial \widetilde{M}_i}(\alpha.x, x)\le d_{\widetilde{\partial M_i}}(\alpha.x,x)$.
Thus $\tau(H_i) \le \tau(\partial M_i)$. The latter is uniformly bounded above for all $i$ since $\partial M_i$ is contained in a compact subset of $\calM_{r}$.
\end{proof}

\bibliographystyle{plain}
\begin{bibdiv}
\begin{biblist}

\bib{agol}{article}{
      author={{Agol}, Ian},
       title={{Tameness of hyperbolic 3-manifolds}},
        date={2004-05},
     journal={arXiv Mathematics e-prints},
      eprint={math/0405568},
}

\bib{anderson}{incollection}{
      author={Anderson, James~W.},
       title={Finite volume hyperbolic 3-manifolds whose fundamental group
  contains a subgroup that is locally free but not free},
        date={2002},
      volume={8},
       pages={13\ndash 20},
        note={Geometry and analysis},
      review={\MR{1933685}},
}

\bib{bering2026ascending}{article}{
      author={Bering~IV, Edgar~A},
      author={Heikamp, Jakob},
      author={Kohav, Jack},
      author={Lazarovich, Nir},
      author={Munro, Zachary},
       title={Ascending chains in 3-manifold and relatively hyperbolic groups},
        date={2026},
     journal={arXiv preprint arXiv:2603.27447},
}

\bib{BF}{article}{
      author={Bestvina, Mladen},
      author={Feighn, Mark},
       title={Bounding the complexity of simplicial group actions on trees},
    language={English},
        date={1991},
        ISSN={0020-9910},
     journal={Invent. Math.},
      volume={103},
      number={3},
       pages={449\ndash 469},
}

\bib{calegari-dunfield}{article}{
      author={Calegari, Danny},
      author={Dunfield, Nathan~M.},
       title={An ascending {HNN} extension of a free group inside {${\rm
  SL}_2\mathbb C$}},
        date={2006},
        ISSN={0002-9939},
     journal={Proc. Amer. Math. Soc.},
      volume={134},
      number={11},
       pages={3131\ndash 3136},
         url={https://doi.org/10.1090/S0002-9939-06-08398-5},
      review={\MR{2231894}},
}

\bib{calegari-gabai}{article}{
      author={Calegari, Danny},
      author={Gabai, David},
       title={Shrinkwrapping and the taming of hyperbolic 3-manifolds},
        date={2006},
        ISSN={0894-0347},
     journal={J. Amer. Math. Soc.},
      volume={19},
      number={2},
       pages={385\ndash 446},
         url={https://doi.org/10.1090/S0894-0347-05-00513-8},
      review={\MR{2188131}},
}

\bib{delzant}{article}{
      author={Delzant, Thomas},
       title={Sur l'accessibilit\'{e} acylindrique des groupes de
  pr\'{e}sentation finie},
        date={1999},
        ISSN={0373-0956},
     journal={Ann. Inst. Fourier (Grenoble)},
      volume={49},
      number={4},
       pages={1215\ndash 1224},
         url={http://www.numdam.org/item?id=AIF_1999__49_4_1215_0},
      review={\MR{1703085}},
}

\bib{epstein-marden}{incollection}{
      author={Epstein, D. B.~A.},
      author={Marden, A.},
       title={Convex hulls in hyperbolic space, a theorem of {S}ullivan, and
  measured pleated surfaces},
        date={1987},
   booktitle={Analytical and geometric aspects of hyperbolic space
  ({C}oventry/{D}urham, 1984)},
      series={London Math. Soc. Lecture Note Ser.},
      volume={111},
   publisher={Cambridge Univ. Press, Cambridge},
       pages={113\ndash 253},
      review={\MR{903852}},
}

\bib{primer}{book}{
      author={Farb, Benson},
      author={Margalit, Dan},
       title={A primer on mapping class groups},
      series={Princeton Mathematical Series},
   publisher={Princeton University Press, Princeton, NJ},
        date={2012},
      volume={49},
        ISBN={978-0-691-14794-9},
      review={\MR{2850125}},
}

\bib{grushko}{article}{
      author={Grushko, I.~A.},
       title={On the bases of a free product of groups},
        date={1940},
     journal={Matematicheskii Sbornik},
      volume={8},
       pages={169\ndash 182},
}

\bib{higman-almost-free}{article}{
      author={Higman, Graham},
       title={Almost free groups},
        date={1951},
        ISSN={0024-6115},
     journal={Proc. London Math. Soc. (3)},
      volume={1},
       pages={284\ndash 290},
         url={https://doi.org/10.1112/plms/s3-1.1.284},
      review={\MR{44519}},
}

\bib{higman}{article}{
      author={Higman, Graham},
       title={A finitely related group with an isomorphic proper factor group},
        date={1951},
        ISSN={0024-6107},
     journal={J. London Math. Soc.},
      volume={26},
       pages={59\ndash 61},
         url={https://doi.org/10.1112/jlms/s1-26.1.59},
      review={\MR{38347}},
}

\bib{huber}{article}{
      author={Huber, Heinz},
       title={\"{U}ber analytische {A}bbildungen {R}iemannscher {F}l\"{a}chen
  in sich},
        date={1953},
        ISSN={0010-2571},
     journal={Comment. Math. Helv.},
      volume={27},
       pages={1\ndash 73},
         url={https://doi.org/10.1007/BF02564552},
      review={\MR{54051}},
}

\bib{lafont}{article}{
      author={Joecken, Kyle},
      author={Lafont, Jean-Fran\c{c}ois},
      author={S\'{a}nchez Salda\~{n}a, Luis~Jorge},
       title={Virtually cyclic dimension for 3-manifold groups},
        date={2021},
        ISSN={1661-7207},
     journal={Groups Geom. Dyn.},
      volume={15},
      number={2},
       pages={577\ndash 606},
         url={https://doi-org.libaccess.sjlibrary.org/10.4171/GGD/607},
      review={\MR{4303333}},
}

\bib{kapovich-myasnikov}{article}{
      author={Kapovich, Ilya},
      author={Myasnikov, Alexei},
       title={Stallings foldings and subgroups of free groups},
        date={2002},
        ISSN={0021-8693},
     journal={J. Algebra},
      volume={248},
      number={2},
       pages={608\ndash 668},
         url={https://doi.org/10.1006/jabr.2001.9033},
      review={\MR{1882114}},
}

\bib{kurosh}{article}{
      author={Kurosch, Alexander},
       title={Lokal freie {G}ruppen},
        date={1939},
     journal={C. R. (Doklady) Acad. Sci. URSS (N.S.)},
      volume={24},
       pages={99\ndash 101},
      review={\MR{0002887}},
}

\bib{maskit}{book}{
      author={Maskit, Bernard},
       title={Kleinian groups},
      series={Grundlehren der mathematischen Wissenschaften [Fundamental
  Principles of Mathematical Sciences]},
   publisher={Springer-Verlag, Berlin},
        date={1988},
      volume={287},
        ISBN={3-540-17746-9},
      review={\MR{959135}},
}

\bib{meeks-yau}{article}{
      author={Meeks, William~W., III},
      author={Yau, Shing~Tung},
       title={The existence of embedded minimal surfaces and the problem of
  uniqueness},
        date={1982},
        ISSN={0025-5874,1432-1823},
     journal={Math. Z.},
      volume={179},
      number={2},
       pages={151\ndash 168},
         url={https://doi-org.libaccess.sjlibrary.org/10.1007/BF01214308},
      review={\MR{645492}},
}

\bib{mumford}{article}{
      author={Mumford, David},
       title={A remark on {M}ahler's compactness theorem},
        date={1971},
        ISSN={0002-9939},
     journal={Proc. Amer. Math. Soc.},
      volume={28},
       pages={289\ndash 294},
         url={https://doi.org/10.2307/2037802},
      review={\MR{276410}},
}

\bib{ohshika-potyagailo}{article}{
      author={Ohshika, Ken'ichi},
      author={Potyagailo, Leonid},
       title={Self-embeddings of {K}leinian groups},
        date={1998},
        ISSN={0012-9593},
     journal={Ann. Sci. \'{E}cole Norm. Sup. (4)},
      volume={31},
      number={3},
       pages={329\ndash 343},
         url={https://doi.org/10.1016/S0012-9593(98)80137-7},
      review={\MR{1621417}},
}

\bib{Scott1983}{article}{
      author={Scott, Peter},
       title={The geometries of 3-manifolds},
    language={English},
        date={1983},
        ISSN={0024-6093},
     journal={Bull. Lond. Math. Soc.},
      volume={15},
       pages={401\ndash 487},
         url={hdl.handle.net/2027.42/135276},
}

\bib{sela}{article}{
      author={Sela, Z.},
       title={Acylindrical accessibility for groups},
        date={1997},
        ISSN={0020-9910},
     journal={Invent. Math.},
      volume={129},
      number={3},
       pages={527\ndash 565},
         url={https://doi-org.libaccess.sjlibrary.org/10.1007/s002220050172},
      review={\MR{1465334}},
}

\bib{serre-trees}{book}{
      author={Serre, Jean-Pierre},
      editor={Bass, Hyman},
       title={Arbres, amalgames, {{\(\mathrm{SL}_2\)}}. {R{\'e}dig{\'e}} avec
  la collaboration de {Hyman} {Bass}},
    language={French},
      series={Ast{\'e}risque},
   publisher={Soci{\'e}t{\'e} Math{\'e}matique de France (SMF), Paris},
        date={1977},
      volume={46},
}

\bib{shenitzer}{article}{
      author={Shenitzer, Abe},
       title={Decomposition of a group with a single defining relation into a
  free product},
        date={1955},
        ISSN={0002-9939},
     journal={Proc. Amer. Math. Soc.},
      volume={6},
       pages={273\ndash 279},
         url={https://doi.org/10.2307/2032354},
      review={\MR{69174}},
}

\bib{shusterman}{article}{
      author={Shusterman, Mark},
       title={Ascending chains of finitely generated subgroups},
        date={2017},
        ISSN={0021-8693},
     journal={J. Algebra},
      volume={471},
       pages={240\ndash 250},
         url={https://doi.org/10.1016/j.jalgebra.2016.09.023},
      review={\MR{3569185}},
}

\bib{stallings-folding-trees}{incollection}{
      author={Stallings, John~R.},
       title={Foldings of {$G$}-trees},
        date={1991},
   booktitle={Arboreal group theory ({B}erkeley, {CA}, 1988)},
      series={Math. Sci. Res. Inst. Publ.},
      volume={19},
   publisher={Springer, New York},
       pages={355\ndash 368},
         url={https://doi.org/10.1007/978-1-4612-3142-4_14},
      review={\MR{1105341}},
}

\bib{swarup}{article}{
      author={Swarup, G.~A.},
       title={Decompositions of free groups},
        date={1986},
        ISSN={0022-4049},
     journal={J. Pure Appl. Algebra},
      volume={40},
      number={1},
       pages={99\ndash 102},
         url={https://doi.org/10.1016/0022-4049(86)90032-0},
      review={\MR{825183}},
}

\bib{takahasi}{article}{
      author={Takahasi, Mutuo},
       title={Note on chain conditions in free groups},
        date={1951},
        ISSN={0388-0699},
     journal={Osaka Math. J.},
      volume={3},
       pages={221\ndash 225},
      review={\MR{46362}},
}

\end{biblist}
\end{bibdiv}
 
\end{document}